\newcommand{\Q}{\mathbb{Q}}
\newcommand{\R}{\mathbb{R}}
\newcommand{\Z}{\mathbb{Z}}
\newcommand{\N}{\mathbb{N}}
\newcommand{\A}{\mathcal{A}}
\newcommand{\B}{\mathcal{B}}
\newcommand{\oor}{\vee}
\newcommand{\0}{\emptyset}
\newcommand{\aaa}{\boldsymbol{a}}
\newcommand{\bbb}{\boldsymbol{b}}
\newcommand{\ccc}{\boldsymbol{c}}
\newcommand{\lom}{^{< \omega}} 
\newcommand{\E}{\exists}
\newcommand{\orb}{\text{orb}}
\newcommand{\iv}{^{-1}}
\newcommand{\LL}{\mathcal{L}}
\newcommand{\dom}{\text{dom}}
\newcommand{\Fin}{\text{Fin}}
\newcommand{\Th}{\text{Th}}
\newcommand{\qr}{\text{qr}}
\newcommand{\ord}{\text{ord}}
\begin{document}
\title{Elementarity of Subgroups and Complexity of Theories for Profinite Groups}
%
%
\author{Jason Block\thanks{This paper is an expansion of the extended abstract \cite{me}.}}

\authorrunning{J. Block}
%
\institute{CUNY Graduate Center, New York NY 10016, USA\\
\email{jblock@gradcenter.cuny.edu} }
\maketitle              
\begin{abstract}
Although $S_\infty$ (the group of all permutations of $\mathbb{N}$) is size continuum, both it and its closed subgroups can be presented as the set of paths through a countable tree. The subgroups of $S_\infty$ that can be presented this way with  finite branching trees are exactly the profinite ones. We use these tree presentations to find upper bounds on the complexity of the existential theories of profinite subgroups of $S_\infty$, as well as to prove sharpness for these bounds. These complexity results enable us to distinguish a simple subclass of profinite groups, those with \emph{orbit independence}, for which we find an upper bound on the complexity of the entire first order theory. Additionally, given a profinite subgroup $G$ of $S_\infty$ and a Turing ideal $I$ we define $G_I$ to be the set of elements in $G$ whose Turing degree lies in $I$. We examine to what extent and under what conditions $G_I$ will be an elementary subgroup of $G$. In particular, we construct a profinite group whose subgroup of computable elements is not elementary even for existential formulas.

\keywords{Computable structure theory  \and Permutation groups \and Profinite groups \and Tree presentations.}
\end{abstract}

\section{Introduction}
Traditional computable structure theory deals only with countable structures. As a result, it cannot be used to study most subgroups of $S_\infty$ (the group of all permutations of $\N$). However as described in Section \ref{def section}, a large class of these subgroups (specifically the \textit{closed} subgroups) can be presented as the set of paths through a countable tree, similarly to how the real numbers are presented in computable analysis \cite{ca}.   We will focus our attention on the subgroups of $S_\infty$ that can be presented as the paths through a finite branching tree (the \textit{compact} subgroups), which are exactly the profinite ones.  In \cite{Russell} Miller uses such a presentation to study the absolute Galois group of $\Q$, which is indeed a profinite group that can be viewed as a subgroup of $S_\infty$ after fixing an enumeration of the algebraic closure of $\Q$. A well known result from computable analysis is that the field of computable real numbers is a real closed field and thus an elementary substructure of the field of real numbers. In contrast, Theorem \ref{not always elementary} gives a profinite group whose subgroup of computable elements is not an elementary subgroup.

Much interest in profinite groups stems from their connection to Galois theory. All Galois groups are profinite, and every profinite group is the Galois group of some field extension \cite{Waterhouse}. However, the purpose of this paper is to examine profinite groups simply as groups in their own right. It can be difficult to get an effective handle on uncountable groups, but when such a group acts on $\N$ by permutations we are given the opportunity to do so. Thus, we restrict our attention to the profinite subgroups of $S_\infty$. 

Effective notions for profinite groups within the context of Galois theory were examined by Metakides and Nerode in \cite{m and n} and further by La Roche in \cite{la roche}. Following this work, effective notions for profinite groups in general were studied by Smith in \cite{smith}. The authors of \cite{la roche} and \cite{smith} define a profinite group to be \emph{recursively profinite} if it is isomorphic to the inverse limit of a uniformly computable sequence of finite groups and surjective homomorphisms. As we will see in Proposition \ref{compatible defs}, a profinite group $P$ is recursively profinite if and only if it is isomorphic to a subgroup $G$ of $S_\infty$ such that $T_G$ (the tree that represents $G$) is computable.  

There have been many recent developments in the field of computable analysis and topology,  much of which has been applied to profinite groups. In \cite{downey melinkov}, Downey and Melnikov show that a profinite group is recursively profinite if and only if it can be presented as a computably compact Polish space in which the group operation is computable.  This is further equivalent to having computable Haar measure, as shown by Pauly, Seon and Ziegler in \cite{pauly}. As for abelian profinite groups, Melnikov shows in \cite{melnikov} that such a group is recursively profinite if and only if its Pontryagin dual has a computable presentation. In \cite{koh} Koh, Melnikov and Ng compare several definitions of the more general notion of computable topological group and profinite groups are used to illustrate some of the differences. In \cite{g and m and} Greenberg, Melnikov, Nies and Turetsky examine what they call ``effectively closed'' subgroups of $S_\infty$. This notion is not equivalent to being a closed subgroup $G$ of $S_\infty$ with $T_G$ computable, as the authors construct an effectively closed profinite subgroup of $S_\infty$ that is not recursively profinite. In particular, no tree presentation $T_G$ of the group can have computable branching.

In Section \ref{main results} we use tree presentations to determine bounds on the complexity of the existential theory of  profinite  subgroups of $ S_\infty$. Note that since all elements of $S_\infty$ are functions from $\N\to \N$, an existential sentence about a subgroup is a $\Sigma^1_1$ statement. However, we will see that the existential theory of any profinite subgroup  $G$ of $S_\infty$ is $\Sigma^0_2$ relative to the degree of $T_G$. Additionally, if $G$ has 
\textit{orbit independence}, then the existential theory is $\Sigma^0_1$ relative to the degree of $T_G$. We will also show that these bounds are sharp. Specifically, there exists a profinite  $G$ with orbit independence such that $T_G$ is computable and the existential theory of $G$ is $\Sigma^0_1$ complete, and such a $G$ without orbit independence such that the existential theory is $\Sigma^0_2$ complete. Last, we show that the (entire) first order theory of a profinite  $G$ with orbit independence is $\Delta^0_2$ relative to the degree of $T_G$.

In Section \ref{Elementarity} we consider countable subgroups of profinite subgroups $G$ of $S_\infty$. In particular, given a Turing ideal $I$ we define $G_I$ to be the subgroup of $G$  whose elements (viewed as functions from $\N\to \N$) have Turing degree in $I$. We examine to what degree such a $G_I$ will be an {elementary} subgroup of $G$. As shown by Theorem \ref{not always elementary}, it possible to have that $G_I$ will not be elementary even for purely existential formulas. However, if either $G$ has orbit independence or $I$ is a {Scott ideal}, then $G_I$ will be elementary at least for existential formulas. Additionally if we have both that $G$ has orbit independence and that $I$ is a Scott ideal, then $G_I$ is an elementary subgroup of $G$. 

\section{Tree Presentations}
\label{def section}

\begin{definition}
Let  $G$ be a subgroup of $ S_\infty$. We define the tree $T_G$ to be the subtree of $\N\lom$ containing all initial segments of elements of $G$. That is,$$T_G:=\left\{\tau\in \N\lom: (\E g\in G,n\in\N)\left[\tau=g(0)g\iv(0)g(1)g\iv(1)\cdots g(n)g\iv(n)\right]\right\} $$ where $m\in \N$ is mapped to $g(m)$ under $g$. We define the ordering of $T_G$  via initial segments and write $\tau \sqsubset \sigma$ if $\tau$ is an initial segment of $\sigma$. 
\end{definition}
It should be noted that $T_G$ will have no terminal nodes. That is, every element of $T_G$ is an initial segment of another.

\begin{definition}
    Let  $G$ be a subgroup of $ S_\infty$. We define the degree of $T_G$ to be the join of the Turing degrees of 
    \begin{itemize}
        \item The domain of $T_G$ under some computable coding of $\N\lom$ in which the immediate predecessor relation (and thus $\sqsubset$) is decidable; and

        \item A branching function $Br:T_G\to \N\cup \{\infty\}$ such that $Br(\tau)$ is equal to the number of direct successors of $\tau$ in $T_G$. 
    \end{itemize}
    We denote the degree of $T_G$ as $\deg(T_G)$. 
\end{definition}
We will focus on groups where $T_G$ is finite branching, in which case the range of $Br$ will be a subset of $\N$. It should be noted that $\deg(T_G)$ is not invariant under group isomorphism in that it is possible to have $G\cong G'$ with $\deg(T_G)\neq \deg(T_{G'})$.

\begin{definition}
    Given a tree $T\subset \N\lom$, we define $[T]$ to be the set of all paths through $T$. We endow $[T]$ with the standard product topology in which the basic clopen sets are those of the form $\{f\in \N^\omega: \tau \sqsubset f\}$ for some $\tau \in T$.
\end{definition}

 It is clear that every element of $G$ is represented as a path through $T_G$. In particular, the function $i:G\to [T_G]$ defined by $$i(g)=g(0)g\iv(0)g(1)g\iv(1)\cdots$$ is an embedding. However, it is possible for there to be additional paths through $T_G$ that do not correspond to any element of $G$ in such a way. For example, consider the group  $G$ generated by $\{(0\,1),(2\,3),(4\,5),...\}$ where $(n\,m)$ denotes the permutation that swaps $n$ and $m$ and leaves everything else fixed. We see that $G$ is countable but $[T_G]$ is size continuum. The following proposition gives a simple topological condition for when a group $G$ corresponds nicely with $[T_G]$.

 \begin{proposition}
 \label{closed}
     Let  $G$ be a subgroup of $ S_\infty$. The map $i:G\to [T_G]$ is a bijection if and only if $i(G)$ is a closed subset of $[T_G]$. \qed
 \end{proposition}

 We say that $G$ is a $\textit{closed group}$ when $i(G)$ is closed. Thus, the subgroups of $S_\infty$ that can be represented as the paths through this type of tree are exactly the closed ones. Additionally, we say that $G$ is a \textit{compact group} if $i(G)$ is compact. 

 \begin{definition}
     A  topological group is called profinite if it is isomorphic to the inverse limit of an inverse system of discrete finite groups.
 \end{definition}
The following proposition yields a simple topological definition for profinite groups.

 \begin{proposition}[Folklore; see e.g.\ Theorem 3.7 from \cite{osserman}]
 \label{osserman theorem}
A topological group is profinite if and only if it is Hausdorff, compact, and totally disconnected.\qed
 \end{proposition}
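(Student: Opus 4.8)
The plan is to prove both implications, treating the topological characterization $\Rightarrow$ profinite as the substantive direction.

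For the forward direction, suppose $G \cong \varprojlim_i G_i$ with each $G_i$ finite and discrete. Each $G_i$ is trivially Hausdorff, compact, and totally disconnected, and all three properties pass to the product $\prod_i G_i$ (Hausdorffness and total disconnectedness for products in general, compactness by Tychonoff). The inverse limit sits inside $\prod_i G_i$ as the set of threads $(x_i)$ satisfying $\varphi_{ij}(x_j) = x_i$ for the bonding maps $\varphi_{ij}$; since each $G_i$ is Hausdorff this is a closed subset, and a closed subspace of a compact Hausdorff totally disconnected space is again compact, Hausdorff, and totally disconnected. Hence so is $G$.

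For the reverse direction, suppose $G$ is a Hausdorff compact totally disconnected topological group. The first key step is purely topological: in a compact Hausdorff space the connected component of a point coincides with its quasi-component (the intersection of all clopen sets containing it), so total disconnectedness forces quasi-components to be singletons, which together with compactness shows the clopen sets form a basis for the topology (i.e.\ $G$ is zero-dimensional). The second key step is group-theoretic: every clopen neighborhood $U$ of $1$ contains an open, hence clopen, hence finite-index subgroup. To see this, cover the compact set $U$ by finitely many boxes $A_u \times B_u$ with $A_u B_u \subseteq U$ using continuity of multiplication, intersect the $B_u$'s and throw in symmetry to get a symmetric open neighborhood $W$ of $1$ with $W^n \subseteq U$ for all $n$; then $H = \bigcup_n W^n$ is an open subgroup contained in $U$, and it has finite index because its cosets are an open cover of the compact group $G$. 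Intersecting the finitely many conjugates of $H$ yields an open \emph{normal} finite-index subgroup inside $U$. Consequently the open normal finite-index subgroups $N$ of $G$ form a neighborhood basis at $1$. Directing them by reverse inclusion and forming the inverse system of finite discrete quotients $G/N$ with the canonical surjections, consider $\Phi : G \to \varprojlim_N G/N$, $g \mapsto (gN)_N$. Its kernel is $\bigcap_N N = \{1\}$ (the $N$'s are a basis at $1$ and $G$ is Hausdorff), so $\Phi$ is injective; for surjectivity, given a thread $(g_N N)_N$ the cosets $g_N N$ are closed and have the finite intersection property (for $N_1,\dots,N_k$ the coset indexed by $N_1 \cap \cdots \cap N_k$ lies in each $g_{N_i} N_i$), so compactness provides a common point mapping to the thread. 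Finally $\Phi$ is a continuous group isomorphism from a compact space to a Hausdorff one, hence a homeomorphism, exhibiting $G$ as profinite.

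The main obstacle is the first step of the reverse direction — upgrading ``totally disconnected'' to ``zero-dimensional,'' which genuinely uses compactness via the component/quasi-component identity — together with the careful extraction of an open subgroup from a clopen neighborhood of $1$. Once one knows the open normal finite-index subgroups are cofinal in the neighborhood filter at $1$, the inverse-limit bookkeeping is routine.
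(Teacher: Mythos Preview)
Your argument is correct and is the standard proof of this classical fact. Note, however, that the paper does not actually prove this proposition: it is stated as folklore with a citation to Osserman and closed immediately with \qed, so there is no ``paper's own proof'' to compare against. Your write-up supplies exactly the kind of argument one finds in the cited reference, with the two nontrivial ingredients (components~$=$~quasi-components in compact Hausdorff spaces, and the extraction of an open normal subgroup from any clopen neighborhood of the identity) correctly identified and handled.
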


\begin{definition}
    Given a subgroup $G$ of $ S_\infty$ and $n\in \N$, we define the orbit of $n$ under $G$ as $$\orb_G(n):=\{g(n)\in \N: g\in G \}.$$
\end{definition}
The following proposition is also folklore, but we give a brief proof.

\begin{proposition}
    Let $G$ be a subgroup of $ S_\infty$. The following are equivalent: 
    \begin{enumerate}
        \item[(1)] $G$ is compact, 
        \item[(2)] $G$ is closed and all orbits under $G$ are finite,
        \item[(3)] $G$ is profinite.
    \end{enumerate}
\end{proposition}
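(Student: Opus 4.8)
The plan is to prove the cycle of implications $(1)\Rightarrow(2)\Rightarrow(3)\Rightarrow(1)$. The three main ingredients will be Proposition \ref{closed} (for passing between $i(G)$ and $[T_G]$), Proposition \ref{osserman theorem} (the folklore topological characterization of profinite groups), and the elementary fact that $i\colon G\to[T_G]$ is not merely a group embedding but a \emph{topological} one, when $G$ carries the subspace topology inherited from $S_\infty$ and $[T_G]\subseteq\N^\omega$ carries the product topology with $\N$ discrete. Indeed, the sets $\{g:g(n)=m\}$ and $\{g:g\iv(n)=m\}$ generate the topology of $S_\infty$, and under $i$ these correspond exactly to the coordinate sets of $\N^\omega$ at positions $2n$ and $2n+1$. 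Consequently compactness, the Hausdorff property, and total disconnectedness transfer freely between $G$ and $i(G)$; in particular $i(G)$ is automatically Hausdorff and totally disconnected, being a subspace of $\N^\omega$.

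The two short implications are $(1)\Rightarrow(2)$ and $(3)\Rightarrow(1)$. For the former, if $i(G)$ is compact then it is closed in the Hausdorff space $\N^\omega$, hence closed in $[T_G]$, so $G$ is a closed group; and for each $n$ the continuous projection of $i(G)$ onto the coordinate at position $2n$ has image exactly $\orb_G(n)$, which is therefore a compact subset of the discrete space $\N$ and so finite. For $(3)\Rightarrow(1)$, if $G$ is profinite then Proposition \ref{osserman theorem} makes $G$ compact as a topological space, and since $i$ is a topological embedding $i(G)$ is a compact subset of $[T_G]$, i.e.\ $G$ is a compact group in the sense defined above.

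The substantive step is $(2)\Rightarrow(3)$. Assuming $G$ is closed with all orbits finite, the key observation is that for each coordinate index $k$ the set of values $\{f(k):f\in[T_G]\}$ is finite: a node of $T_G$ of length $2m+1$ equals $g(0)g\iv(0)\cdots g(m)$ for some $g\in G$, so its last entry lies in $\orb_G(m)$, while a node of length $2m+2$ ends in an entry of the form $g\iv(m)$, which lies in $\{h(m):h\in G\}=\orb_G(m)$ because inversion is a bijection of $G$ (equivalently, $T_G$ is finitely branching). Hence $[T_G]$ is a closed subset of a product of finite discrete spaces and is compact by Tychonoff; since $G$ is closed, $i(G)=[T_G]$ by Proposition \ref{closed}, so $i(G)$ is compact. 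Via the topological embedding $i$ and the remark in the first paragraph, $G$ is a compact, Hausdorff, totally disconnected topological group, so Proposition \ref{osserman theorem} yields that $G$ is profinite. I expect this to be the only delicate point: one must resist simply saying ``$[T_G]$ is closed in $\N^\omega$, hence compact'' --- $\N^\omega$ is not compact --- and instead use the finiteness of the orbits to confine all paths to a product of finite sets, which itself hinges on the small remark that the odd-indexed coordinates also range over orbits because $h\mapsto h\iv$ permutes $G$.
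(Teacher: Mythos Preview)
Your proof is correct and takes essentially the same approach as the paper. The paper organizes the argument as $(1)\Leftrightarrow(2)$ (invoking K\"onig's lemma by name for $(2)\Rightarrow(1)$) together with $(1)\Leftrightarrow(3)$ via Proposition~\ref{osserman theorem}, whereas you run the cycle $(1)\Rightarrow(2)\Rightarrow(3)\Rightarrow(1)$ and use Tychonoff in place of K\"onig; since your $(2)\Rightarrow(3)$ step first establishes compactness and then applies Proposition~\ref{osserman theorem}, the substance is identical.
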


\begin{proof}
 Suppose that $G$ is compact. Since our topology is Hausdorff, we have that $G$ is closed. By Proposition \ref{closed}, $i(G)=[T_G]$.  Assume towards a contradiction that there is some $n\in \N$ with $\orb_G(n)$ infinite. Let $\{\tau_i\}_{i\in \N}$ be the (infinite) collection of all elements of $T_G$ of length $n+1$. Note that $\{\{f:\tau_i\sqsubset f\}\}_{i\in \N}$ is an open cover of $[T_G]=i(G)$ with no finite subcover, which contradicts that $G$ is compact. Hence, $(1)\implies (2)$.

 If $G$ is closed and all orbits in $G$ are finite, then it follows that $G$ is compact as a consequence of K\"onig's lemma. Hence, $(2)\implies (1)$. 

 The topology we have defined is Hausdorff and totally disconnected. If $G$ is compact, then $G$ is also profinite by Proposition \ref{osserman theorem}. Hence, $(3)\iff (1)$. \qed
\end{proof}

We have that all profinite subgroups of $S_\infty$ will have countably many orbits (all of which are finite). We fix an enumeration of these orbits as follows:

\begin{definition}
    Let $G$ be a profinite subgroup of $S_\infty$. Define $\{O_{G,i}\}_{i\in \N}$ so that $O_{G,0}=\orb_G(0)$ and $O_{G,n+1}$ is the orbit of the least natural number not in any $O_{G,m}$ with $m\leq n$.
\end{definition}
We can use these orbits to define finite approximations of $G$ up to the first $k\in \N$ many orbits.

\begin{definition}
\label{gs}
    Let $G$ be a profinite subgroup of $S_\infty$. Given $g\in G$ and $k\in \N$, define $g_k=g\upharpoonright \bigcup_{i\leq k}O_{G,i}$. Define $$G_k:=\{g_k:g\in G\}.$$ 
\end{definition}
We can also define the restriction of $G$ to only the $k$th orbit.

\begin{definition}
\label{G_k}
    Let $G$ be a profinite subgroup of $S_\infty$. Define $$H_k:= \{g\upharpoonright O_{G,k}: g\in G\}.$$
\end{definition}
Note that both $G_k$ and $H_k$ are finite groups for all $k\in \N$. Additionally, they are both uniformly computable given $T_G$.

\begin{definition}
    Let $G$ be a profinite subgroup of $S_\infty$. We say that $G$ has orbit independence if it is isomorphic to the Cartesian product of all $H_k$. That is, $$G\cong \prod_{k\in \N} H_k.$$
\end{definition}

It should be noted that the property of orbit independence is not preserved under isomorphism. In fact, every profinite group is isomorphic to a subgroup of $S_\infty$ without orbit independence. For an example, consider the groups $$G=\{1_G, (0\,1)(2\,3)\}$$ and $$G'=\{1_{G'},(0\,1)\}$$ where $1_G$ denotes the identity permutation. Note that $G\cong G' \cong\Z/2\Z $ and it is clear that $G'$ has orbit independence. However for $G$, we have $H_0\cong H_1\cong \Z/2\Z $  and $H_n$ is trivial for all $n>1$. Thus, $\prod H_k\cong \Z/2\Z \times \Z/2\Z \ncong G$. 

A profinite subgroup of $S_\infty$ is isomorphic to such a subgroup with orbit independence if and only if it is isomorphic to the direct product of finite groups. Since not all profinite groups can be expressed as such a direct product, some profinite groups have no orbit independent representation. This is illustrated by the following proposition:

\begin{proposition}
    Let $G$ be a (nontrivial) profinite subgroup of $S_\infty$. If $G$ is torsion free, then $G$ does not have orbit independence. 
\end{proposition}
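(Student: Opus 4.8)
The plan is to prove the contrapositive: assuming a nontrivial profinite $G\le S_\infty$ has orbit independence, I will produce a nonidentity element of finite order in $G$, so that $G$ is not torsion free. By the definition of orbit independence we have a group isomorphism $G\cong\prod_{k\in\N}H_k$ with each $H_k$ a finite group, so it suffices to locate a nonidentity torsion element on the product side and pull it back.

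The first step is to check that some $H_k$ is nontrivial. The sets $O_{G,i}$ partition $\N$: at each stage of the defining recursion the least natural number not yet enumerated is placed into a new orbit, so an easy induction shows every natural number eventually lies in some $O_{G,i}$. Consequently any $g\in G$ that restricts to the identity on every orbit is the identity permutation of $\N$. Hence, if every $H_k$ were trivial, $G$ itself would be trivial, contrary to hypothesis; fix $k_0$ with $H_{k_0}$ nontrivial.

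The second step is routine. Choose a nonidentity $h\in H_{k_0}$; since $H_{k_0}$ is finite, $h$ has finite order $n=\ord(h)>1$. Let $x\in\prod_{k\in\N}H_k$ be the tuple whose $k_0$-th coordinate is $h$ and whose other coordinates are trivial. Then $x\neq 1$ while $x^n=1$, so the preimage of $x$ under $G\cong\prod_{k\in\N}H_k$ is a nonidentity torsion element of $G$, contradicting torsion freeness. There is no genuine obstacle here — the only point needing a touch of care is the assertion that the orbits exhaust $\N$ (used to conclude that some $H_k$ is nontrivial), and past that the argument is immediate from the elementary structure of a direct product of finite groups.
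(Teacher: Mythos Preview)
Your proof is correct and follows essentially the same approach as the paper: assume orbit independence, locate a nontrivial $H_{k_0}$, pick a nonidentity element there, and embed it as a torsion element of the product, then pull back. The only difference is that you spell out why some $H_k$ must be nontrivial (via the orbit partition of $\N$), whereas the paper simply takes the least such index without comment.
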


\begin{proof}
    Suppose we had $G\cong \prod_{k\in \N} H_k$. Take the least $n$ such that $H_n$ is nontrivial and let $h$ be some non-identity element of $H_n$. Since all $H_k$ are finite, $h$ is a torsion element. Define the element $(h_k)_{k\in \N}\in G$ such that $h_n=h$ and $h_m$ is the identity when $m\neq n$. This would be a torsion element of $G$.  \qed
\end{proof}

Orbit independent profinite groups do arise naturally, including in the context of Galois theory. For example, consider the field $F=\Q(\sqrt{p_i})_{i\in \N}$ where $p_i$ denotes the $i$th prime number. We have that the absolute Galois group of $F$ is isomorphic to the direct product of countably many copies of the two element group, which is naturally embedded as an orbit independent subgroup of $S_\infty$.  

Our definition for the degree of $T_G$ is compatible with the notion of a recursively profinite group used by La Roche and Smith.

\begin{definition}[\cite{la roche,smith}]
\label{recursive def}
 A profinite group $P$ is called recursively profinite if there exists a uniformly computable sequence $\{P_n, \pi_n\}_{n\in \N}$ such that each $P_n$ is a finite group, each $\pi_n$ is a surjective homomorphism from $P_{n+1}$ to $P_n$, and $P$ is isomorphic to the inverse limit of the sequence.
     
 \end{definition}

\begin{proposition}
\label{compatible defs}
    A profinite group $P$ is recursively profinite if and only if it is isomorphic to a subgroup $G$ of $S_\infty$ with $T_G$ computable.
\end{proposition}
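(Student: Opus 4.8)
The statement is an equivalence, so there are two directions. For the forward direction (computable $T_G$ $\Rightarrow$ recursively profinite) the plan is to take the finite approximations $G_n$ of $G$ themselves as the inverse system. For the converse the plan is to build the textbook permutation representation of a profinite group and then verify, carefully, that it produces a computable tree. Throughout, $i\colon G\to[T_G]$ denotes the canonical embedding from Section~\ref{def section}.

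\textbf{Computable tree $\Rightarrow$ recursively profinite.} Assume $G\le S_\infty$ is profinite with $T_G$ computable and set $P:=G$. I would take $P_n:=G_n$ and let $\pi_n\colon G_{n+1}\to G_n$ be the restriction map. By the remark following Definition~\ref{G_k} the groups $G_n$ are uniformly computable from $T_G$; since the orbit decomposition up to any finite stage is also computable from $T_G$, the $\pi_n$ are uniformly computable as well, and each is a surjective homomorphism because every element of $G_n$ extends to an element of $G$ and hence of $G_{n+1}$. It then remains to check $G\cong\varprojlim(G_n,\pi_n)$ as topological groups. The map $g\mapsto(g_n)_{n\in\N}$ is an injective homomorphism because the orbits $O_{G,i}$ cover $\N$; for surjectivity, a coherent sequence $(g_n)_n$ glues to a single permutation $g$ of $\N$, and choosing witnesses $h^{(n)}\in G$ agreeing with $g_n$ on the first $n+1$ orbits, the strings $i(h^{(n)})$ agree with the candidate path $i(g)$ on ever longer initial segments, so every initial segment of $i(g)$ lies in $T_G$ and $i(g)\in[T_G]=i(G)$ by closedness, giving $g\in G$. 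Continuity in both directions is immediate, so $P=G$ is recursively profinite.

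\textbf{Recursively profinite $\Rightarrow$ computable tree.} Conversely, assume $P=\varprojlim(P_n,\pi_n)$ for a uniformly computable system as in Definition~\ref{recursive def}. I would set $K_n:=\ker(P\to P_n)$; these are open normal subgroups of finite index with $\bigcap_nK_n=\{1\}$, and $P/K_n\cong P_n$ (the projections being onto because the $\pi_n$ are). Let $P$ act on the countable set $X:=\bigsqcup_{n\in\N}P_n$ by letting $p\in P$ act on the $n$-th block as left multiplication by $\pi_n(p)$; normality of $K_n$ makes this the transitive, faithful left regular action of $P/K_n\cong P_n$ on that block. Since $\bigcap_nK_n=\{1\}$ the action on $X$ is faithful, so after fixing a computable bijection $X\to\N$ that lists block $0$, then block $1$, and so on, I obtain an embedding $P\hookrightarrow S_\infty$ with image some $G\cong P$. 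As $P$ is compact and $P\to S_\infty$ is continuous and injective into a Hausdorff space, it is a homeomorphism onto $G$, so $G$ is a closed, hence profinite, subgroup of $S_\infty$ isomorphic to $P$.

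\textbf{Effectiveness of $T_G$, and the main obstacle.} With this enumeration the orbits of $G$ are exactly the blocks, so $O_{G,k}=P_k$, $H_k\cong P_k$, and $G_k\cong\{(a_0,\dots,a_k):a_i\in P_i,\ \pi_i(a_{i+1})=a_i\text{ for }i<k\}$, which is uniformly computable from the given system (list the tuples of $P_0\times\cdots\times P_k$ and test coherence). Writing $M_k:=\sum_{i\le k}|P_i|$ for the total size of the first $k+1$ orbits, a string $b_0c_0\cdots b_Nc_N$ lies in $T_G$ iff, for $k$ least with $N<M_k$, some tuple of $G_k$ acts block-wise by left multiplication consistently with $g(j)=b_j$ and $g\iv(j)=c_j$ for all $j\le N$; this is a finite search, so the domain of $T_G$ is decidable, and counting the one-step extensions of a node that remain in $T_G$ — again a finite search through $G_k$ (or $G_{k+1}$, if the extension crosses into the next block) — shows the branching function is computable as well, so $T_G$ is computable. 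I expect the algebra here to be routine, since realizing $P$ as a closed subgroup of $S_\infty$ is standard; the real work, and the main obstacle, is this last bookkeeping — keeping the block boundaries $M_k$, the orbit decomposition, and the finite groups $G_k$ simultaneously and uniformly computable so that membership in $T_G$ and its branching function are decidable.
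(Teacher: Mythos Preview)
Your proof is correct and follows essentially the same strategy as the paper. In both directions you and the paper make the same moves: the $G_n$ with their restriction maps give the inverse system in one direction, and a compatible family of permutation representations of the $P_n$ gives the embedding into $S_\infty$ in the other; you are simply more explicit than the paper in choosing the left regular action on $\bigsqcup_n P_n$ (and in verifying the inverse-limit isomorphism and the homeomorphism onto the image) where the paper leaves the compatible embeddings $f_n\colon P_n\hookrightarrow S_{N_n}$ abstract.
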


\begin{proof}
    Suppose that $P\cong G$ with $T_G$ computable. Defining $\nu_n: G_{n+1}\to G_n$ (with $G_n$ as in Definition \ref{G_k}) so that $\nu_n( g_{n+1})=g_n$, we get that $\{G_n,\nu_n\}_{n\in \N}$ is a uniformly computable sequence as required in definition \ref{recursive def} whose inverse limit is isomorphic to $P$.

    For the other direction, suppose that $\{P_n,\pi_n\}_{n\in \N}$ is as in definition $\ref{recursive def}$. For each $n\in \N$, let $N_n$ be a natural number such that $P_n$ is isomorphic to a subgroup of $S_{N_n}$ (the group of permutations of $\{0,...,N_n-1\}$). Define $f_0:P_0\to S_{N_0}$ such that $f_0$ is a group embedding. Given $f_n$, define $f_{n+1}:P_{n+1}\to S_{N_{n+1}}$ such that $f_{n+1}$ is a group embedding and ``respects'' $\pi_{n+1}$ in the sense that if $\pi_{n+1}(p_{n+1})=p_n$, then $f_{n+1}(p_{n+1})\upharpoonright N_n=f_n(p_n)$. Define $G$ to be the set of $g\in S_\infty$ such that for all $n\in \N$, there exists a $p\in P_n$ with $f_n(p)= g\upharpoonright N_n$. We have that $G\cong P$ and that $T_G$ is exactly the set $$\left\{\tau\in \N\lom: \left(\E n\in \N, p\in P_n, m<N_n\right)[\tau = f_n(p)(0)f_n(p)(1)\cdots f_n(p)(m)]\right\}.$$ It is clear that the domain and branching function of $T_G$ are computable, hence $T_G$ is computable. \qed

\end{proof}

\section{Complexity of Theories}
\label{main results}
We now consider the complexity of the existential theory of a profinite subgroup of $S_\infty$. To do so, we must first establish a few lemmas. 

\begin{definition}
     A positive formula is a first order formula that can be expressed using only the logical connectives $\wedge$ and $\vee$  without the use of negation symbols. A negative formula is the negation of a positive formula.

\end{definition}
It should be noted that quantifiers are allowed in positive/negative formulas and that many formulas are neither positive nor negative. The following lemma illustrates a well known property of positive formulas.

\begin{lemma}
\label{pos preserved}
    Positive formulas are preserved under surjective homomorphisms. That is, if $\alpha^+$ is a positive formula and $f:\A\to \B$ is a surjective homomorphism, then $\A\models \alpha^+(\bar a)$ implies that $\B\models \alpha^+(f(\bar a))$. \qed
\end{lemma}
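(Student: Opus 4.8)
The plan is to prove the statement by a straightforward induction on the structure of the positive formula $\alpha^+$, treating the term and relation symbols as the base case, $\wedge$ and $\vee$ as trivial inductive steps, and the two quantifiers as the substantive part of the argument. Throughout, we use that a homomorphism $f\colon\A\to\B$ by definition commutes with all function symbols and constants, so that for any term $t$ and any tuple $\bar a$ from $\A$ we have $f\bigl(t^{\A}(\bar a)\bigr)=t^{\B}(f(\bar a))$, and that $f$ carries each basic relation forward: $(\bar c)\in R^{\A}$ implies $(f(\bar c))\in R^{\B}$.

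For the base case I would check atomic formulas directly. If $\alpha^+$ is $t_1=t_2$, then $t_1^{\A}(\bar a)=t_2^{\A}(\bar a)$ gives $t_1^{\B}(f(\bar a))=f(t_1^{\A}(\bar a))=f(t_2^{\A}(\bar a))=t_2^{\B}(f(\bar a))$; if $\alpha^+$ is $R(t_1,\dots,t_n)$, apply the relation-preserving clause of the homomorphism definition to the tuple $(t_1^{\A}(\bar a),\dots,t_n^{\A}(\bar a))$. This is exactly the one direction a homomorphism guarantees, and it is the reason negation must be excluded, since a homomorphism need not reflect relations. The cases $\alpha^+=\beta^+\wedge\gamma^+$ and $\alpha^+=\beta^+\vee\gamma^+$ then follow immediately by applying the induction hypothesis to $\beta^+$ and $\gamma^+$ and using that $\models$ commutes with $\wedge$ and $\vee$.

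The quantifier cases are where the real content lies. For $\alpha^+=\exists x\,\beta^+(x,\bar y)$: if $\A\models\exists x\,\beta^+(x,\bar a)$, pick a witness $c\in A$ with $\A\models\beta^+(c,\bar a)$, apply the induction hypothesis to get $\B\models\beta^+(f(c),f(\bar a))$, and conclude $\B\models\exists x\,\beta^+(x,f(\bar a))$. For $\alpha^+=\forall x\,\beta^+(x,\bar y)$: if $\A\models\forall x\,\beta^+(x,\bar a)$, let $b\in B$ be arbitrary, use \emph{surjectivity} of $f$ to choose $c\in A$ with $f(c)=b$, so that $\A\models\beta^+(c,\bar a)$, and then the induction hypothesis yields $\B\models\beta^+(b,f(\bar a))$; since $b$ was arbitrary, $\B\models\forall x\,\beta^+(x,f(\bar a))$. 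The main (and essentially only) obstacle is the universal-quantifier step: it is precisely here that surjectivity of $f$ is indispensable, and one should note that without it the lemma fails. Everything else is a mechanical induction on formula complexity.
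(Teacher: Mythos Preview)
Your proof is correct and is exactly the standard induction on formula complexity. The paper itself does not give a proof at all---it marks the lemma as folklore and ends the statement with \qed---so your argument is simply a spelled-out version of what the paper takes for granted.
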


\begin{lemma}
\label{positive goes down}
Let $G$ be a profinite subgroup of $S_\infty$ and let $\alpha^+$ be a positive formula in the language of groups. If $k<l$, then $$G_l\models \alpha^+(\bar g_l) \implies G_k\models \alpha^+(\bar g_k)$$ for any $\bar g \in G\lom$, where $\bar g_l$ and $\bar g_k$ are as in Definition \ref{gs}. 

\end{lemma}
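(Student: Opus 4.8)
The plan is to exhibit $G_k$ as a surjective homomorphic image of $G_l$ via a restriction map, and then apply Lemma~\ref{pos preserved}.

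First I would define $\nu\colon G_l\to G_k$ by $\nu(g_l)=g_k$; that is, given $g\in G$, the permutation $g_l=g\upharpoonright\bigcup_{i\le l}O_{G,i}$ is further restricted to the subset $\bigcup_{i\le k}O_{G,i}$, which makes sense since $k<l$ gives $\bigcup_{i\le k}O_{G,i}\subseteq\bigcup_{i\le l}O_{G,i}$. This is well defined on $G_l$: if $g,g'\in G$ satisfy $g_l=g'_l$, then a fortiori $g_k=g'_k$. It is surjective by construction, since every element of $G_k$ is $g_k$ for some $g\in G$, hence equals $\nu(g_l)$.

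Next I would check that $\nu$ is a group homomorphism. The key point is that each orbit $O_{G,i}$ is invariant under the action of $G$, hence so is each initial union $\bigcup_{i\le m}O_{G,i}$; therefore restriction to $\bigcup_{i\le l}O_{G,i}$ and restriction to $\bigcup_{i\le k}O_{G,i}$ are each group homomorphisms from $G$ (onto $G_l$ and $G_k$ respectively), and $\nu$ is the induced map making the evident triangle commute. Concretely, for $g_l,g'_l\in G_l$ the composite $g_l\circ g'_l$ restricted to $\bigcup_{i\le k}O_{G,i}$ equals $g_k\circ g'_k$, because composition of functions commutes with restriction to an invariant set. Since the language of groups consists only of the group operation (with identity and inverse), a homomorphism of groups is exactly a homomorphism of structures in that language, so Lemma~\ref{pos preserved} is applicable to $\nu$.

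Finally, since $\nu(\bar g_l)=\bar g_k$ componentwise, Lemma~\ref{pos preserved} yields at once that $G_l\models\alpha^+(\bar g_l)$ implies $G_k\models\alpha^+(\nu(\bar g_l))=\alpha^+(\bar g_k)$, which is the claim. I do not expect any real obstacle; the only point requiring care is verifying that $\nu$ is well defined and multiplicative, which is precisely where $G$-invariance of the orbits (and hence of their initial unions) is used.
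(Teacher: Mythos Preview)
Your proposal is correct and follows essentially the same approach as the paper: the paper simply observes that the natural projection $\pi\colon G_l\to G_k$, $\pi(g_l)=g_k$, is a surjective homomorphism and then invokes Lemma~\ref{pos preserved}. You have supplied more detail on well-definedness and multiplicativity than the paper does, but the underlying argument is identical.
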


\begin{proof}
   The natural projection $\pi: G_l \to G_k$ defined such that $\pi(g_l)=g_k$ is a surjective homomorphism. Hence, the result follows from Lemma \ref{pos preserved}. \qed
\end{proof}

\begin{corollary}
\label{cor}
    Let $G, k$ and $l$ be as in the previous lemma. If $\alpha^-$ is a negative formula, then 
    $$G_k\models \alpha^{-}(\bar g_k) \implies  G_l\models \alpha^{-}(\bar g_l) $$ for any $\bar g\in G\lom$. \qed
\end{corollary}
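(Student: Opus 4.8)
The plan is to obtain this immediately from Lemma \ref{positive goes down} by contraposition, using nothing beyond the definition of a negative formula. Since $\alpha^-$ is negative, we may write $\alpha^- = \neg\alpha^+$ for some positive formula $\alpha^+$ in the language of groups. With $\bar g \in G\lom$ fixed, the implication $G_k \models \alpha^-(\bar g_k) \implies G_l \models \alpha^-(\bar g_l)$ is logically equivalent to its contrapositive, namely $G_l \models \alpha^+(\bar g_l) \implies G_k \models \alpha^+(\bar g_k)$.

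The key step is then simply to note that this contrapositive is precisely the conclusion of Lemma \ref{positive goes down}, applied with the same $G$, the same indices $k < l$, and the same tuple $\bar g$. So I would invoke that lemma and conclude. Since the lemma is stated for an arbitrary $\bar g \in G\lom$, the resulting statement about $\alpha^-$ also holds for arbitrary $\bar g$, which is exactly what the corollary asserts.

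There is essentially no obstacle in this argument; it is a routine dualization. The only point worth a moment's care is that $\bar g_k$ and $\bar g_l$ denote the restrictions of the \emph{same} tuple $\bar g$ as in Definition \ref{gs}, so that the hypothesis and conclusion of Lemma \ref{positive goes down} line up with those of the corollary without any reindexing; once this is observed, the proof is complete.
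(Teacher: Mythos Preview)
Your proposal is correct and is exactly the intended argument: the paper states the corollary with a bare \qed, indicating it follows immediately from Lemma~\ref{positive goes down} by the contraposition you describe.
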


\begin{lemma}
\label{no orb ind}
   Let $G$ be a profinite subgroup of $S_\infty$.  If $\alpha$ is  quantifier free,  then $G\models \alpha(\bar g)$ if and only if  $G_k\models \alpha(\bar g_k) $ for all sufficiently large  $k\in \N$.
\end{lemma}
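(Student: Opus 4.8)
The plan is to reduce the claim to the behaviour of a single group word, and then to show that the truth value of each atomic subformula of $\alpha$ in $G_k$ stabilises as $k\to\infty$, with stable value equal to its truth value in $G$. First I would normalise $\alpha$: every atomic formula in the language of groups has the form $t_1 = t_2$ for group terms $t_1,t_2$, hence is equivalent to $w(\bar x) = e$ where $w$ is the word $t_1 t_2^{-1}$. So, fixing $\bar g \in G\lom$, $\alpha(\bar g)$ (and each $\alpha(\bar g_k)$) is a Boolean combination of finitely many literals, each of the form $w_t(\bar x) = e$ or $w_t(\bar x) \neq e$ for $t = 1,\dots,m$, evaluated at $\bar g$ (resp.\ $\bar g_k$).

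The key structural point: write $U_k = \bigcup_{i \le k} O_{G,i}$. Since $U_k$ is a union of $G$-orbits it is $G$-invariant, so restriction to $U_k$ is a homomorphism from $G$ onto $G_k$; by induction on the length of a word, $w(\bar g_k) = w(\bar g)\upharpoonright U_k$ for every word $w$. Hence $w(\bar g_k) = e$ in $G_k$ if and only if $w(\bar g)$ fixes every point of $U_k$. Because $\N = \bigcup_i O_{G,i}$, this gives: (i) if $w(\bar g) = e$ in $G$ then $w(\bar g_k) = e$ in $G_k$ for every $k$; (ii) if $w(\bar g) \neq e$ in $G$, say $w(\bar g)$ moves some point of $O_{G,i_0}$, then $w(\bar g_k) \neq e$ in $G_k$ for every $k \ge i_0$. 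Moreover, as $k$ grows $U_k$ only gets bigger, so the truth of ``$w(\bar g_k) = e$'' is monotone non-increasing in $k$; therefore it holds for all sufficiently large $k$ iff it holds for all $k$ iff $w(\bar g) = e$ in $G$, and dually ``$w(\bar g_k) \neq e$'' holds for all sufficiently large $k$ iff $w(\bar g) \neq e$ in $G$.

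Finally I would assemble these facts. For each literal $\ell_t$, the previous paragraph provides a bound $K_t$ such that for all $k \ge K_t$ the truth value of $\ell_t$ at $\bar g_k$ in $G_k$ equals a fixed value $v_t$, and moreover $v_t$ equals the truth value of $\ell_t$ at $\bar g$ in $G$. Setting $K = \max_{t \le m} K_t$, for every $k \ge K$ all $m$ literals take the values $v_1,\dots,v_m$ in $G_k$, so $G_k \models \alpha(\bar g_k)$ iff the Boolean combination $B(v_1,\dots,v_m)$ of these values is true iff $G \models \alpha(\bar g)$. This is exactly the asserted equivalence.

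I do not expect a genuine obstacle here; the points requiring care are purely bookkeeping. One must really use that atomic formulas in group language are ``word $=$ identity'' statements (so that each is witnessed, when false in $G$, by a single moved point lying in one fixed finite orbit $O_{G,i_0}$) — this is precisely what forces ``all sufficiently large $k$'' rather than ``all $k$''. The only other thing to note is that the threshold $K$ depends on $\bar g$ and on $\alpha$, not on $\alpha$ alone, which the statement permits.
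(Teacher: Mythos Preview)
Your proof is correct and follows essentially the same approach as the paper's: both reduce to atomic formulas $w(\bar x)=1$, use that restriction $G\to G_k$ is a surjective homomorphism so that $w(\bar g_k)=w(\bar g)\upharpoonright U_k$, observe the resulting monotonicity of truth values in $k$, and then combine across the finitely many literals. The only difference is organisational: the paper packages the monotonicity step via the preceding Lemma~\ref{positive goes down} and Corollary~\ref{cor} and argues by induction on the Boolean structure, whereas you argue directly and take a maximum of thresholds; neither gains anything substantive over the other.
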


\begin{proof}
    For the base case, let $\alpha$ be atomic. We have that $\alpha(\bar x)\equiv W(\bar x)=1$ where $W(\bar x)$ is a word over $\{x,x\iv: x\in \bar x\}$ and $1$ is the group identity symbol. Clearly, if $W(\bar g)=1_G$ then $W(\bar g_k)=1_{G_k}$ for all $k$. On the other hand, if $W(\bar g_k)=1_{G_k}$ for sufficiently large $k$ then we have by the previous lemma that $W(\bar g_k)=1_{G_k}$ for all $k\in \N$. Thus given any $n\in \N$ and a large enough $l$ such that $n\in \dom(\bar g_l)$, we have that $W(\bar g_l)$ maps $n$ to $n$. Hence, $W(\bar g)=1_G$. 

    \paragraph{Negative Step:} Let $\alpha\equiv\neg\beta$ with $\beta$ atomic. Suppose $G\models \neg \beta(\bar g)$. This gives that there is a $k$ such that $G_k\models \neg \beta(\bar g_k) $. By the previous corollary, we must have that $G_l\models \neg \beta(\bar g_l) $ for all $l\geq k$ and thus for all sufficiently large $l$.

    Now suppose that $G_k\models \neg \beta(\bar g_k)$ for sufficiently large $k$. There must only be finitely many $k$ such that $G_k\models \beta(\bar g_k)$. Thus, from the base case, we have that $G\models \neg \beta(\bar g)$.

    \paragraph{Conjunctive/Disjunctive Step:} If the statement holds for $\beta_1$ and $\beta_2$, then it is clear that it holds for $\beta_1\,\&\,\beta_2$ as well. If the statement holds for $\beta_1$ or for $\beta_2$, then it is clear that it holds for $\beta_1\oor\beta_2$ as well. \qed

\end{proof}

\begin{lemma}
\label{and k}
    Let $G$ be a profinite subgroup of $S_\infty$ with orbit independence. Let $\alpha$ be an existential sentence in the language of groups. We have that $G\models \alpha$ if and only if $G_k\models \alpha$ for some $k\in \N$.
\end{lemma}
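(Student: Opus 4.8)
The plan is to write $\alpha$ as $\exists\bar x\,\varphi(\bar x)$ with $\varphi$ quantifier free and to prove the two implications separately, noting in advance that orbit independence is needed for only one of them.

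\textbf{Forward direction.} Suppose $G\models\alpha$ and fix a witnessing tuple $\bar g\in G\lom$ with $G\models\varphi(\bar g)$. Since $\varphi$ is quantifier free, Lemma~\ref{no orb ind} yields $G_k\models\varphi(\bar g_k)$ for all sufficiently large $k$; for any such $k$ the tuple $\bar g_k$ witnesses $G_k\models\alpha$. This half uses nothing about orbit independence.

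\textbf{Backward direction.} The crux is that orbit independence makes each finite approximation $G_k$ isomorphic to a \emph{subgroup} of $G$, not merely a quotient of it. I would build the embedding in three steps. (i) For arbitrary profinite $G$, the map $g_k\mapsto(g\upharpoonright O_{G,j})_{j\le k}$ is a well-defined injective homomorphism $G_k\hookrightarrow\prod_{j\le k}H_j$: it is well defined because $g_k$ determines each restriction $g\upharpoonright O_{G,j}$, it is a homomorphism because $g$ setwise fixes each orbit, and it is injective because $\bigcup_{i\le k}O_{G,i}$ is the entire domain of $g_k$. (I would flag here that this map need not be surjective, since $G_k$ can be a proper subdirect product of $H_0,\dots,H_k$; this is why I only claim an embedding, not that $G_k\cong\prod_{j\le k}H_j$.) (ii) Padding a tuple with the group identity in coordinates $j>k$ embeds $\prod_{j\le k}H_j$ into $\prod_{j\in\N}H_j$. (iii) Composing with an isomorphism $\prod_{j\in\N}H_j\cong G$ furnished by orbit independence produces an embedding $G_k\hookrightarrow G$. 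Finally, since $\alpha$ is existential and existential sentences are preserved upward under embeddings (if $f\colon A\to B$ is an embedding and $A\models\alpha$ for $\alpha$ existential, then $B\models\alpha$), the hypothesis $G_k\models\alpha$ gives $G\models\alpha$.

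\textbf{Main obstacle.} The forward direction is immediate from Lemma~\ref{no orb ind}, so the real content lies in producing the embedding $G_k\hookrightarrow G$ in the backward direction. The one genuinely delicate point is step (i): under the abstract notion of orbit independence used here it is \emph{not} true in general that $G_k\cong\prod_{j\le k}H_j$, so one must settle for the embedding $G_k\hookrightarrow\prod_{j\le k}H_j$, after which the abstract isomorphism $G\cong\prod_{j\in\N}H_j$ is exactly what carries the padding inclusion into $G$. The only model-theoretic input is the standard fact that existential sentences persist under embeddings.
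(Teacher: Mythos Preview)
Your proof is correct and follows the paper's approach: forward direction via Lemma~\ref{no orb ind}, backward direction by exhibiting an embedding $G_k\hookrightarrow G$ and using upward preservation of existentials (the paper cites \L{}o\'s--Tarski for this). The only difference is that the paper builds the embedding in one stroke---send $\gamma\in G_k$ to the permutation agreeing with $\gamma$ on $\bigcup_{i\le k}O_{G,i}$ and equal to the identity elsewhere---whereas you factor through $G_k\hookrightarrow\prod_{j\le k}H_j\hookrightarrow\prod_{j\in\N} H_j\cong G$; your route is slightly more careful in that it needs only the abstract isomorphism $G\cong\prod_j H_j$ literally stated in the definition of orbit independence, while the paper's direct extension tacitly reads orbit independence as asserting that the natural coordinate map $g\mapsto(g\upharpoonright O_{G,j})_{j\in\N}$ is onto.
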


\begin{proof}
    We have that $\alpha\equiv \E \bar x\beta$ where $\beta$ is a quantifier free formula. If $G\models \alpha $ then there is some $\bar g\in G\lom$ such that $G\models \beta(\bar g)$. Thus, Lemma \ref{no orb ind} gives that there is a $k$ with $G_k\models \beta(\bar g_k)$ and so $G_k\models \alpha$. 

     Now suppose that $G_k\models \alpha$. Define an embedding $f:G_k\to G$ such that $f(\bar \gamma)_k=\bar \gamma$ and $f(\bar \gamma)$ is just the identity on all orbits $O_{G,l}$ with $l>k$. Note that since $G$ has orbit independence, we will in fact have that $f(\bar \gamma)\in G$ and so $f$ is a well defined embedding. By the \L{}o\'s-Tarski preservation theorem,  existential formulas are preserved under embedding and so $G\models \alpha$.  \qed

    \begin{theorem}
\label{ex theory with orb in}
    Let $G$ be a profinite subgroup of $S_\infty$ with orbit independence. The existential theory of $G$ is $\Sigma^0_1$ relative to $\deg(T_G)$. 
\end{theorem}

\begin{proof}
    Let $\alpha$ be an existential sentence. By the previous lemma, $G\models \alpha$ if and only if $$(\E k)[G_k\models \alpha]$$ which is $\Sigma^0_1$ relative to $\deg(T_G)$. \qed
\end{proof}

    \end{proof}
The following proposition gives that the above theorem is sharp. 

\begin{proposition}
    There exists a profinite  subgroup $G$ of $S_\infty$ with orbit independence such that $T_G$ is computable and the existential theory of $G$ is $\Sigma^0_1$ complete. 
\end{proposition}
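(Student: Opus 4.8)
The plan is to make the \emph{orders of the torsion elements} of $G$ encode a halting set. Fix a $\Sigma^0_1$-complete set $K$, say $K=\{e:\varphi_e(e)\downarrow\}$; since $K\neq\0$ it has a total computable enumeration, i.e.\ a total computable function $s\mapsto e_s$ with $K=\{e_s:s\in\N\}$. Let $p_n$ denote the $n$-th prime. Partition $\N$ into consecutive finite intervals $B_0,B_1,B_2,\dots$ with $|B_k|=p_{e_k}$, let $\sigma_k\in S_\infty$ be a $p_{e_k}$-cycle on $B_k$ that fixes every point outside $B_k$, and put
\[
G:=\{\,g\in S_\infty:(\forall k)(\E j\in\N)\;g\upharpoonright B_k=\sigma_k^{\,j}\upharpoonright B_k\,\}.
\]
Since the $B_k$ partition $\N$ and have pairwise disjoint supports, the choices on the different blocks are independent, so $G\cong\prod_{k\in\N}\langle\sigma_k\rangle\cong\prod_{k\in\N}\Z/p_{e_k}\Z$. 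All orbits are finite ($O_{G,k}=B_k$) and $G=[T_G]$ is closed, so $G$ is a profinite subgroup of $S_\infty$; moreover $H_k=\langle\sigma_k\rangle$ (Definition \ref{G_k}) and $G\cong\prod_k H_k$, so $G$ has orbit independence. Because $k\mapsto|B_k|$ is computable (being $p_{e_k}$) and the action on each block is the explicit cyclic action, the domain and branching function of $T_G$ are computable, hence $T_G$ is computable.

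Next I would set up the many-one reduction $K\leq_m\Th_\exists(G)$. For each $e\in\N$ let $\alpha_e$ be the existential sentence in the language of groups
\[
\alpha_e\ :\equiv\ \E x\,\bigl(x^{p_e}=1\ \wedge\ x\neq 1\bigr),
\]
where $x^{p_e}$ abbreviates the $p_e$-fold product; the map $e\mapsto\alpha_e$ is clearly computable. I claim $G\models\alpha_e$ iff $e\in K$. If $G\models\alpha_e$ with witness $g$, then $g\neq 1$ and $g^{p_e}=1$, so, $p_e$ being prime, $g$ has order exactly $p_e$; writing $g=(g_k)_k$ in the product, some coordinate satisfies $g_k\neq 1$, and $g_k^{p_e}=1$ together with Lagrange's theorem in $\Z/p_{e_k}\Z$ forces $p_e\mid p_{e_k}$, hence $p_e=p_{e_k}$, hence $e_k=e$, hence $e\in K$. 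Conversely, if $e\in K$ then $e=e_k$ for some $k$, and $\sigma_k\in G$ has order $p_e$, witnessing $\alpha_e$. Thus $K\leq_m\Th_\exists(G)$ and the existential theory of $G$ is $\Sigma^0_1$-hard.

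Finally I would combine this with the bound already available: since $T_G$ is computable, Theorem \ref{ex theory with orb in} gives that $\Th_\exists(G)$ is $\Sigma^0_1$, and the reduction above shows it is $\Sigma^0_1$-hard, so $\Th_\exists(G)$ is $\Sigma^0_1$-complete, as desired. I do not expect any real obstacle here: the only things to verify are routine, namely that the $G$ defined above is indeed a profinite subgroup of $S_\infty$ with the claimed orbit structure and computable $T_G$, and the elementary group-theoretic fact that a direct product of cyclic groups of prime order has an element of order $p$ exactly when one of its factors is $\Z/p\Z$. The whole content of the argument is the choice of which finite groups to attach to the orbits so that an existential sentence can ``see'' each individual factor.
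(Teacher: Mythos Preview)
Your proposal is correct and takes essentially the same approach as the paper: both encode $\emptyset'$ via the sentences $\alpha_e\equiv\E x\,(x\neq 1\ \wedge\ x^{p_e}=1)$ by attaching cyclic groups of prime order to the orbits so that $G$ has an element of order $p_e$ iff $e\in\emptyset'$. The only cosmetic difference is packaging---the paper builds $G$ stage by stage (adding a $\Z/p_e\Z$ factor when it sees $e$ enter $\emptyset'$ and a trivial factor otherwise), whereas you use a total computable enumeration of $K$ to set the block sizes directly; the content is the same.
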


\begin{proof}
    By the previous theorem, we need only build a $G$ with $T_G$ computable such that the existential theory codes $\0'$. Define the formula $\alpha_n$ for all $n\in \N$ by $$\alpha_n:= (\E x)[x\neq 1 \,\&\, x^{p_n}=1]$$ where $1$ is the identity element and $\{p_n\}_{n\in \N}$ is the sequence of all primes. We build $G$ such that $G\models \alpha_n$ if and only if $n\in \0'$.  

    \paragraph{Construction} 
    \paragraph{Stage $0$:} Define $O_{G,0}=\{0\}$ and define $H_0$ to be the trivial group. 

    \paragraph{Stage $s$:} Let $N_s\in \N$ be the least not in any $O_{G,i}$ with $i<s$. Find the least $e\leq s$ such that $\Phi_{e,s}(e)\downarrow$ and $G_{s-1}\models \neg\alpha_e$. If no such $e$ exists, define $O_{G,s}=\{N_s\}$ and $H_s$ to be the trivial group. If there is such an $e$, define $O_{G,s}=\{N_s, N_s+1,...,N_s+p_e -1\}$ and define $H_s$ to be  cyclic  on $O_{G,s}$.  
    \paragraph{Verification}
    Since each $G_s$ is computable, it is clear that the tree $T_G$ is computable. If $n\notin \0'$, then no $H_s$ will be of size $p_n$. Thus, no element has order $p_n$, which gives $G\models \neg \alpha_n$. If $n\in \0' $, then there will come a stage $t$ in which $n$ is the least such that $\Phi_{n,t}(e)\downarrow$ and there is currently no $H_s$ of size $p_n$. We will then make $H_t$ cyclic and of size $p_n$ which will assure that $G\models \alpha_n$.
    \qed
\end{proof}

\begin{theorem}
\label{sigma 2}
    Let $G$ be any profinite subgroup of $S_\infty$ (not necessarily with orbit independence). The existential theory of $G$ is $\Sigma^0_2$ relative to $\deg(T_G)$.
\end{theorem}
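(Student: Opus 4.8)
The plan is to show, uniformly in an existential sentence $\alpha\equiv\E\bar x\,\beta(\bar x)$ with $\beta$ quantifier free and $\bar x$ an $m$-tuple, that ``$G\models\alpha$'' is $\Sigma^0_2$ relative to $\deg(T_G)$. The starting observation is that $G$ is the inverse limit of the tower $\{G_k\}_{k\in\N}$ under the natural surjections $\nu_k\colon G_{k+1}\to G_k$, $\nu_k(g_{k+1})=g_k$ (this follows from $i(G)=[T_G]$, which holds since $G$ is closed). Concretely, $G^m$ is in bijection with the set of infinite branches of the tree $\mathcal{U}$ whose nodes of length $n$ are the coherent sequences $s=(\bar\gamma^0,\dots,\bar\gamma^{n-1})$ with $\bar\gamma^i\in(G_i)^m$ and $\nu_i(\bar\gamma^{i+1})=\bar\gamma^i$; a branch $(\bar\gamma^0,\bar\gamma^1,\dots)$ corresponds to the unique $\bar g\in G^m$ with $\bar g_k=\bar\gamma^k$ for all $k$. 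Since each $G_k$ is finite and uniformly computable from $\deg(T_G)$, the tree $\mathcal{U}$ is finitely branching and, together with its branching function, computable from $\deg(T_G)$.

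By Lemma~\ref{no orb ind} applied to $\beta$, we have $G\models\alpha$ if and only if there exist $\bar g\in G^m$ and $N\in\N$ with $G_k\models\beta(\bar g_k)$ for all $k\ge N$. For each $N$ I would introduce the subtree $\mathcal{U}_N\subseteq\mathcal{U}$ consisting of those nodes $s=(\bar\gamma^0,\dots,\bar\gamma^{|s|-1})$ such that $G_k\models\beta(\bar\gamma^k)$ for every $k$ with $N\le k<|s|$; since this condition on $s$ only mentions entries with index below $|s|$, it is inherited by initial segments, so $\mathcal{U}_N$ is a (finitely branching) tree. The key claim is
$$G\models\alpha \iff (\E N\in\N)\bigl[\,\mathcal{U}_N\text{ is infinite}\,\bigr].$$
The forward direction is immediate: if $\bar g,N$ are as above, then every initial segment of the branch of $\mathcal{U}$ coding $\bar g$ lies in $\mathcal{U}_N$, so $\mathcal{U}_N$ is infinite. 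For the reverse direction, if $\mathcal{U}_N$ is infinite then K\"onig's lemma yields an infinite branch; read off from $\mathcal{U}$, this branch is an element $\bar g\in G^m$ satisfying $G_k\models\beta(\bar g_k)$ for all $k\ge N$, and Lemma~\ref{no orb ind} then gives $G\models\beta(\bar g)$, hence $G\models\alpha$.

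It remains to do the complexity bookkeeping. Given $N$ and $\beta$, the relation ``$s\in\mathcal{U}_N$'' is decidable from $\deg(T_G)$: coherence of $s$ is decidable from $\deg(T_G)$, and each instance ``$G_k\models\beta(\bar\gamma^k)$'' is decidable since $G_k$ is computable from $\deg(T_G)$ and $\beta$ is quantifier free. Moreover the finitely many candidate nodes of any given length can be listed from $\deg(T_G)$ using the branching function of $\mathcal{U}$. Hence ``$\mathcal{U}_N$ is infinite'', i.e.\ ``for every $n$ there is a length-$n$ node of $\mathcal{U}_N$'', is $\Pi^0_1$ relative to $\deg(T_G)$, uniformly in $N$ and in $\beta$. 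Therefore ``$\E N\,[\mathcal{U}_N\text{ infinite}]$'' is $\Sigma^0_2$ relative to $\deg(T_G)$, uniformly in $\alpha$, which is exactly the assertion that the existential theory of $G$ is $\Sigma^0_2$ relative to $\deg(T_G)$.

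\textbf{Expected main obstacle.} The step needing the most care is the compactness argument identifying infinite branches of $\mathcal{U}$ (equivalently of $T_G$) with genuine elements of $G$, rather than mere coherent sequences of finite partial permutations — this is where $i(G)=[T_G]$ and profiniteness are really used — together with checking that $\mathcal{U}_N$ is honestly closed under initial segments so that K\"onig's lemma applies to it. The remaining arithmetic of the quantifiers is routine once the uniform computability of the $G_k$ from $\deg(T_G)$ is invoked.
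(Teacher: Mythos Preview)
Your proposal is correct and follows essentially the same approach as the paper: both reduce $G\models\E\bar x\,\beta$ via Lemma~\ref{no orb ind} to the existence of a level $N$ (equivalently, a starting node $\bar\tau$) above which the finitely branching, $\deg(T_G)$-computable tree of ``$\beta$-good'' coherent tuples stays infinite, and then read this off as a $\Sigma^0_2(\deg(T_G))$ condition. The only cosmetic difference is that you work with the inverse-limit tree $\mathcal{U}$ and name K\"onig's lemma explicitly, whereas the paper works directly inside $T_G$ and writes out the $\Sigma^0_2$ formula without separately verifying the equivalence; your write-up is in that sense slightly more detailed about the compactness step.
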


\begin{proof}
    Suppose $\alpha=\E\bar x\beta$ with $\beta$ quantifier free. By Lemma \ref{no orb ind}, given $\bar g\in G\lom$ we have that $G\models\beta(\bar g)$ if and only if $G_k\models \beta(\bar g_k)$ for all but finitely many $k\in \N$. Let $T_\beta$ be the subset of $T_G$ defined by $$T_\beta:=\{ \tau \in T_G: G_{l(\tau)}\models \beta( \tau))\}$$ where $l(\tau)$ is defined as the natural number such that $ \tau\in G_{l(\tau)}$. Note that $T_\beta$ is computable given $T_G$. We have that $G\models \alpha$ if and only if $$(\E \bar\tau\in T_G\lom)(\forall k\geq l(\bar \tau) )\left[\bigvee_{\bar \sigma\in G_k\lom}\left( \bar\tau  \sqsubseteq \bar \sigma\,\&\, \bigwedge_{\bar \tau\sqsubseteq\bar \rho \sqsubseteq \bar \sigma} \bar\rho\in T_\beta\right)\right] $$ which is $\Sigma^0_2$ relative to $\deg(T_G)$ (recall that $\deg(T_G)$ computes the branching function $T_G$, and can thus compute the elements of each $G_k$).   \qed
\end{proof}
The following proposition gives that the above theorem is sharp. 
\begin{proposition}
\label{sigma 2 complete}
    There exists a profinite subgroup $G$ of $S_\infty$ (without orbit independence) with $T_G$ computable such that the existential theory of $G$ is $\Sigma^0_2$ complete.
\end{proposition}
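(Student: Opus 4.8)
The plan is to reduce the $\Sigma^0_2$-complete set $\Fin=\{n:W_n\text{ is finite}\}$ to the existential theory of a suitable $G$, using the one‑variable sentences $\alpha_n:=\E x\,(x\neq 1\aand x^{p_n}=1)$ (with $\{p_n\}$ the primes), exactly as in the $\Sigma^0_1$‑complete case, but now arranging that whether $G$ has a nontrivial element of order $p_n$ encodes a fact about $W_n$ that is only settled ``in the limit.'' The upper bound is free from Theorem~\ref{sigma 2} once $T_G$ is computable, so all the content lies in constructing a profinite $G\le S_\infty$ with $T_G$ computable, without orbit independence, and with $G\models\alpha_n\iff n\in\Fin$.

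For each $n$ I would build a gadget group $G^{(n)}$ as follows. Let $m_n(k):=1+|\{\,t\le k:W_{n,t}\neq W_{n,t-1}\,\}|$, a computable nondecreasing function of $k$ with $\sup_k m_n(k)<\infty$ if and only if $W_n$ is finite. Take the uniformly computable inverse system with $P^{(n)}_k:=\Z/p_n^{m_n(k)}\Z$ and bonding maps the natural reductions (the identity map at stages where $m_n$ does not increase, the surjection $\Z/p_n^{c+1}\Z\to\Z/p_n^{c}\Z$ where it does), and set $G^{(n)}:=\varprojlim_k P^{(n)}_k$. Then $G^{(n)}\cong\Z/p_n^{\,m}\Z$ (a nontrivial finite $p_n$‑group, hence with $p_n$‑torsion) where $m=\sup_k m_n(k)$, when $W_n$ is finite, and $G^{(n)}\cong\Z_{p_n}$ (torsion free) when $W_n$ is infinite. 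Following the construction in the proof of Proposition~\ref{compatible defs}, uniformly in $n$, realize each $G^{(n)}$ as a subgroup of $S_\infty$ with computable tree; relocate the $n$‑th realization onto an infinite computable block $B_n$ of a fixed computable partition $\N=\bigsqcup_n B_n$; and let $G:=\prod_n G^{(n)}\le S_\infty$ act on $\N$ blockwise. Then $G$ is closed with all orbits finite, hence profinite, and since only finitely many blocks meet any initial segment and each $T_{G^{(n)}}$ is uniformly computable, $T_G$ is computable.

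To verify $G\models\alpha_n\iff n\in\Fin$: if $g\in G$ is nontrivial with $g^{p_n}=1$, then on each block $B_m$ with $m\neq n$ the restriction $g\upharpoonright B_m$ lies in $G^{(m)}$, a pro‑$p_m$ group, and has finite order dividing $p_n$; since $\gcd(p_m,p_n)=1$ this forces $g\upharpoonright B_m=1$, so $g$ is supported on $B_n$ and $g\upharpoonright B_n$ is a nontrivial $p_n$‑torsion element of $G^{(n)}$. Such an element exists iff $G^{(n)}$ has nontrivial torsion, i.e.\ iff $W_n$ is finite. This gives a computable many‑one reduction of $\Fin$ to the existential theory of $G$, which together with Theorem~\ref{sigma 2} (and $T_G$ computable) shows that theory is $\Sigma^0_2$‑complete. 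Finally, to see $G$ lacks orbit independence, fix $n^*$ with $W_{n^*}$ infinite (such $n^*$ exist, e.g.\ $W_{n^*}=\N$), so $G^{(n^*)}\cong\Z_{p_{n^*}}$ is torsion free and nontrivial; then every torsion element of $G$ restricts to the identity on $B_{n^*}$, so the torsion subgroup of $G$ is contained in the proper closed subgroup $\{g:g\upharpoonright B_{n^*}=\mathrm{id}\}$ and is not dense in $G$. On the other hand, each $H_k$ of Definition~\ref{G_k} for $G$ is a finite group (a finite cyclic $p$‑group), so if $G$ had orbit independence then $G\cong\prod_k H_k$; since $G$ is infinite, infinitely many $H_k$ are nontrivial, and any such product contains $\bigoplus_k H_k$, a dense subgroup all of whose elements are torsion. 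As density of the torsion subgroup is a topological‑group isomorphism invariant, this is a contradiction.

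The main obstacle is pinning down the gadget $G^{(n)}$ so that all constraints hold simultaneously: it must be uniformly computably presentable with \emph{surjective} bonding maps — so that Proposition~\ref{compatible defs} genuinely yields a computable $T_G$, since structure once placed in the tree cannot be retracted — yet its torsion must appear exactly when $W_n$ is finite, which is intrinsically a $\Sigma^0_2$ event. Letting the exponent $m_n(k)$ of the cyclic bottom group grow precisely at the $W_n$‑change stages is what converts ``$G^{(n)}$ has $p_n$‑torsion'' into ``$\sup_k m_n(k)<\infty$'' into ``$W_n$ finite.'' A secondary delicate point is the orbit‑independence argument, where one must use the torsion‑free factor $\Z_{p_{n^*}}$ to keep the torsion subgroup of $G$ from being dense, in contrast with every genuinely orbit‑independent presentation.
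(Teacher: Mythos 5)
Your proposal is correct and is essentially the paper's argument: the paper also reduces $\Fin$ via the sentences $\alpha_n$, and for each $n$ grows a chain of cyclic $p_n$-power orbits whose exponent increases exactly when $W_{n}$ changes, so that the corresponding factor is a finite cyclic $p_n$-group (with $p_n$-torsion) when $W_n$ is finite and is (a copy of) $\Z_{p_n}$ (torsion-free) when $W_n$ is infinite, with your pro-$p$ coprimality observation playing the role of the paper's Lemma~\ref{lemmaa}. The only packaging differences are that the paper builds the permutations explicitly stage by stage rather than invoking Proposition~\ref{compatible defs} on an abstract inverse system, and that you additionally verify the ``without orbit independence'' clause directly (which the paper leaves implicit; note it also follows at once from Theorem~\ref{ex theory with orb in}, since an orbit-independent $G$ with computable $T_G$ would have $\Sigma^0_1$ existential theory).
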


\begin{proof}
    Recall that the set $\Fin=\{e\in \N: |W_e|<\infty\}$, where $W_e$ is the domain of $\Phi_e$, the $e$th Turing program, is $\Sigma^0_2$ complete (see \cite{soare}, Theorem 4.3.2). Let  $\{p_n\}_{n\in \N}$ be the sequence of all primes. Given $n\in \N$, define the formula $$\alpha_n:= (\E x)[x\neq 1 \,\&\, x^{p_n}=1].$$ We construct $G$ so that $$G\models \alpha_n \iff n\in \Fin.$$ This, along with Theorem \ref{sigma 2}, ensures that the existential theory of $G$ is $\Sigma^0_2$ complete. 

   We construct $G$ in stages, defining $G_s$ at stage $s$ of the construction. At stages of the form $s=\langle n,m\rangle$, we work toward making sure that $G$ will model $\alpha_n$ just if $W_n$ is finite. Specifically, if $|W_{n,m+1}|>|W_{n,m}|$ then we make sure that if $g\in G_{s-1}$ with $g\neq 1_{G_{s-1}}$, then any  $g'\in G_s$ with $g\sqsubset g'$ has $g'^{p_n}\neq 1_{G_s}$. We also create a new element not equal to $1_{G_s}$ that is of order $p_n$. If $W_{n,m+1}=W_{n,m}$, then we define $H_s$ to be the trivial group (thus if $g\sqsubset g'$ with $g\in G_{s-1}$ and $g'\in G_s$, then $g'^{p_n}=1_{G_{s}}$ if and only if $g^{p_n}=1_{G_{s-1}}$).

   \paragraph{Construction} Define a bijection $\langle\rangle:\N^2\to \N$ such that $0=\langle 0,0\rangle$ and $\langle n,m\rangle <\langle n,m+1\rangle $ for all $n,m\in \N$. Define $l_0=0$. For all $s>0$, define $l_s$ to be the least natural number not in $O_{G,s-1}$. 

   \paragraph{Stage $0=\langle 0,0\rangle$:} Define $O_{G,0}=\{0,1\}$  and $H_0=G_0=\{(0)(1), (0\,1)\} $.  
   \paragraph{Stage $s=\langle n,0\rangle$ with $n>0$:} Define $O_{G,s}=\{l_s,l_s +1,...,l_s + p_n-1\}$. Define $H_s$ to be the cyclic group on $O_{G,s}$. Define $G_s=\{g^\frown h: g\in G_{s-1}, h\in H_s$\}. 

   \paragraph{Stage $s=\langle n,m\rangle$ with $m>0$:} Check if $|W_{n,m}|>|W_{n,m-1}|$. 
   \begin{itemize}
       \item If no, then define $O_{G,s}=\{l_s\}$ and $G_s=\{g^\frown (l_s):g \in G_{s-1}\}$. Note, this gives that $H_s$ is the trivial group.
       \item If yes, then take $N$ so that this is the $N$th time that $|W_{n,x}|>|W_{n,x-1}|$. That is, define $$N=1+|\{x\in \N:0<x<m \,\&\, |W_{n,x}|>|W_{n,x-1}|\}|.$$ Define $O_{G,s}=\{l_s,l_{s}+1,...,l_{s} + p_n^{N+1}-1\}$ and define $H_s$ to be the cyclic group on $O_{G,s}$. Define $t$ to be the  stage that we had added an orbit of size $p_n^N$.  We define $$G_s=\{g_i^\frown h_i: 0\leq i<p_n^N \, \& \, h_i\in H_s\, \&\, g_i\in G_{s-1}\text{ with } g_i(l_t)-l_t\equiv h_i(l_s)-l_s \mod p_n^N \}.$$

   \end{itemize}

   For example, suppose we are at stage $1=\langle 0,1 \rangle$. If $W_{0,1}=W_{0,0}=\0$, then we will have $$G_1=\{(0)(1)(2), (0\,1)(2)\}.$$ If $W_{0,1}\neq \0$, then we will have $$G_1=\{(0)(1)(2)(3)(4)(5),(0)(1)(2\, 4)(3\,5),(0\,1)(2\,3\,4\,5),(0\,1)(2\,5\,4\,3)\}.$$

   \paragraph{Verification}
   Since each $G_s$ is computable, it is clear that the tree $T_G$ is computable. Thus we need only show that $G\models \alpha_n$ if and only if $n\in \Fin$. 

   \begin{lemma}
   \label{lemmaa}
       Let $s=\langle n,m\rangle$ with $|W_{n,m}|>|W_{n,m-1}|$. If $g\in G_s$ with $g^{p_n}=1_{G_s}$, then $g\upharpoonright{l_s}=1_{G_{s-1}}.$
   \end{lemma}

   \begin{proof} Let $N$ and $t$  be defined as they were at stage $s=\langle n,m\rangle$ of the construction. Note that $(g_i^\frown h_i)^{p_n}=1_{G_s}$ if and only if $h_i^{p_n}=1_{H_s}$ and $g_i^{p_n}=1_{G_{s-1}}$. Since $h_i^{p_n}=1_{H_s}$, we must have that $h_i(l_s)\equiv 0\mod p_n^N$.  This gives that $g_i(l_t)=l_t$, and so $g_i$ is the identity permutation when restricted to $O_{G,t}$. Similarly, we will get that $g_i$ is the identity permutation on $O_{G,r}$ for all $r$ of the form $r=\langle n,x\rangle$. In order for $g_i^{p_n}=1_{G_{s-1}}$, we must also have that $g_i$ is the identity permutation on $O_{G,r}$ for all $r$ that are not of the form $\langle n,x\rangle$ as all of these $O_{G,r}$ will either be of size 1, or of a size not divisible by $p_n$. Hence, we get that $g_i=g\upharpoonright l_s=1_{G_{s-1}}$. \qed \end{proof}

   If $n\notin \Fin$, then there will be infinitely many stages $s$ of the form $s=\langle n,m\rangle$ with $|W_{n,m}|>|W_{n,m-1}|$. Let $g\in G$. If $g^{p_n}=1$, then by Lemma \ref{lemmaa} we have $g_{s-1}=g\upharpoonright l_{s-1}=1_{G_{s-1}}$ for each such $s$. However, if $g\in G$ was a witness to $\alpha_n$ then we would have by Lemma \ref{no orb ind} that $G_k\models g_k\neq 1_{G_k}\,\&\, g_k^{p_n}=1_{G_k}$ (where $g_k=g\upharpoonright l_k$) for all but finitely many $k$, which is a contradiction. Hence, there is no witness to $\alpha_n$ in $G$ and so $G\models \neg \alpha_n$. 

   Now suppose that $n\in \Fin$. We have that there is a least natural number $m$  such that $W_n$ gains no new elements after stage $m$. This  gives that for $s=\langle n,m\rangle $, $|H_s|$ is a multiple of $p_n$, but no $H_x$ with $x>s$ will have $|H_x|$ divisible by $p_n$. Note that by our instructions, there will be an element $g\in G_s$ that is not the identity, but is of order $p_n$. Since we will never have $|W_{n,x}|>W_{n,x-1}$ for any $x>s$, we will have that there is an element of $G$ that is equal to $g$ on $G_s$, and is equal to the identity on all orbits higher than that of $G_s$. This element will be a witness to $\alpha_n$. \qed
\end{proof}

So far we have only considered existential theories. We conclude by now expanding to entire first order theories for subgroups with orbit independence, which we show to be $\Delta^0_2$ relative to $\deg(T_G)$ as a consequence of the following theorem of Feferman and Vaught.

\begin{theorem}[Theorem 6.6 from \cite{FV}]
\label{first fv corollary}
Given any first order $\LL$-sentence $\phi$, we can compute $n\in \N$ such that for every family $\{\A_i:i\in I\}$ of $\LL$-structures there exists $J\subseteq I$ with $|J|\leq n$ such that if $\prod_{i\in I}\A_i\models \phi$, then $\prod_{i\in J'}\A_i\models \phi $ for all $J'$ with $J\subseteq J'\subseteq I$. \qed

\end{theorem}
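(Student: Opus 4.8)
The plan is to derive this from the quantifier-rank form of the Feferman--Vaught composition theorem, whose content is that the first order theory of a direct product, up to any fixed quantifier rank, depends only on a threshold-bounded count of the factors grouped by their own quantifier-rank types. First I would fix a finite relational presentation of $\LL$ (relationalizing the group operations, e.g.\ replacing multiplication by a ternary relation $M(x,y,z)$ abbreviating $xy=z$), so that for each $r\in\N$ there are, up to logical equivalence, only finitely many $\LL$-sentences of quantifier rank at most $r$. Call a maximal consistent such set a \emph{rank-$r$ type}, and let $\theta_1,\dots,\theta_{N_r}$ enumerate them; both $N_r$ and the $\theta_j$ are computable from $r$ and $\LL$, and every $\LL$-structure satisfies exactly one $\theta_j$. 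Write $\qr(\phi)=r$.

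The core lemma I would establish is an absorption statement: there is a threshold $m_r\in\N$, computable from $r$, such that whether $\prod_{k}\B_k\models\phi$ depends only on the \emph{capped profile} $\big(\min(c_1,m_r),\dots,\min(c_{N_r},m_r)\big)$, where $c_j=|\{k:\B_k\models\theta_j\}|$ counts the factors of each type; equivalently, two families with the same capped profile have $r$-equivalent products (i.e.\ products satisfying the same sentences of quantifier rank $\le r$). I would prove this by induction on $r$ using Ehrenfeucht--Fra\"{i}ss\'{e} games: in a product an atomic formula about tuples reduces to the conjunction over coordinates of the corresponding atomic formula in the factors, and Duplicator's winning strategy in the $r$-round game on two products is assembled coordinate-wise from strategies on matched factors. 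The key point is that factors sharing a rank-$(r{-}1)$ type become interchangeable after one round, so only their number up to a round-dependent threshold matters; this forces $m_r$ to grow with $r$ (roughly $m_r\approx 2^{r}$), but it stays a fixed computable bound independent of the family.

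Granting the core lemma, the theorem follows by a counting argument. Given $\phi$ with $\qr(\phi)=r$, set $n=N_r\cdot m_r$, computed from $\phi$ alone. Suppose $\prod_{i\in I}\A_i\models\phi$ and let $c_j=|\{i\in I:\A_i\models\theta_j\}|$. For each $j$ choose $J_j\subseteq\{i\in I:\A_i\models\theta_j\}$ with $|J_j|=\min(c_j,m_r)$, and put $J=\bigcup_{j}J_j$, so $|J|=\sum_j\min(c_j,m_r)\le N_r\cdot m_r=n$. Now fix any $J'$ with $J\subseteq J'\subseteq I$ and let $c_j'=|\{i\in J':\A_i\models\theta_j\}|$. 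Since $J_j\subseteq J'$ we have $\min(c_j,m_r)\le c_j'\le c_j$, and in either case (namely $c_j\ge m_r$ or $c_j<m_r$) this sandwich yields $\min(c_j',m_r)=\min(c_j,m_r)$. Hence $\prod_{i\in J'}\A_i$ and $\prod_{i\in I}\A_i$ have the same capped profile, so by the core lemma they are $r$-equivalent and $\prod_{i\in J'}\A_i\models\phi$.

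The main obstacle is the core lemma, i.e.\ the composition theorem with an explicit computable threshold. The delicate part is the inductive EF-game step: after Spoiler plays a tuple in one product, Duplicator must respond so that, simultaneously in every coordinate, the selected factor-elements continue to match up to rank $r-1$, while the two families are assumed to agree only on capped type-counts rather than exact ones. Showing that the finitely many ``over-threshold'' factors of each type can be freely permuted or absorbed without Spoiler detecting a difference within the remaining rounds is where $m_r$ must be chosen carefully, and is the technical heart of the argument. The relationalization step and the finiteness of the type list are routine, but I would state them explicitly, since they are exactly what make $N_r$ and $m_r$ computable from $\phi$.
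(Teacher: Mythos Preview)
The paper does not prove this theorem at all: it is quoted verbatim as Theorem~6.6 of Feferman--Vaught \cite{FV} and immediately marked \qed, so there is no argument in the paper to compare your proposal against. Your write-up is therefore not a reconstruction of the paper's proof but an independent sketch of the Feferman--Vaught machinery itself.

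That said, your outline is a sound modern route to the result. The reduction from the core lemma to the stated theorem is clean and correct: once the rank-$r$ theory of a direct product depends only on the capped type-profile, choosing $\min(c_j,m_r)$ witnesses of each rank-$r$ type and taking $n=N_r\cdot m_r$ gives exactly the monotone sandwich $\min(c_j,m_r)\le c_j'\le c_j$ you need, and your case analysis forcing $\min(c_j',m_r)=\min(c_j,m_r)$ is right. The relationalization step is also fine for the purpose at hand, since direct products commute with passing to the graph relations of the operations, and you only need \emph{some} computable bound, not the optimal one in the original signature.

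The only place to be careful is the EF-game proof of the core lemma. In a full direct product a single Spoiler move is a tuple with arbitrary (possibly infinite) support across the index set, so Duplicator cannot simply match factors one-to-one when the two families have different numbers of factors of a given type. The standard fix is to first prove the binary preservation lemma $\A\equiv_r\A'$ and $\B\equiv_r\B'$ imply $\A\times\B\equiv_r\A'\times\B'$ coordinatewise, and then establish an absorption step of the form $\A^{m_r}\equiv_r \A^{m_r}\times \A$ with $m_r$ computable from $r$; iterating these two facts collapses any family to one indexed by the capped profile. Your sketch gestures at this (``over-threshold factors can be freely absorbed''), but this absorption, not the coordinatewise matching, is where the real work and the growth of $m_r$ live, so in a full write-up it should be isolated and proved on its own.
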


\begin{corollary}
\label{FV corollary}
    Let $G$ be a profinite subgroup of $S_\infty$ with orbit independence. Let $\alpha$ be any first order sentence in the language of groups. We have $G\models \alpha$ if and only if $G_k\models \alpha$ for all sufficiently large $k\in \N$.
\end{corollary}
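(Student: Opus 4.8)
The plan is to combine orbit independence with the Feferman--Vaught theorem (Theorem \ref{first fv corollary}), applied separately to $\alpha$ and to $\neg\alpha$. First I would record the structural point that, since $G$ has orbit independence, not only is $G\cong\prod_{k\in\N}H_k$ but also $G_k\cong\prod_{i\leq k}H_i$ for every $k\in\N$. Indeed, the map sending $g_k$ to the tuple $(g\upharpoonright O_{G,i})_{i\leq k}$ is a group homomorphism, it is injective because the orbits $O_{G,i}$ are pairwise disjoint and $\bigcup_{i\leq k}O_{G,i}=\dom(g_k)$, and it is surjective exactly because orbit independence lets us realize every tuple $(h_0,\dots,h_k)\in\prod_{i\leq k}H_i$ as $g\upharpoonright\bigcup_{i\leq k}O_{G,i}$ for some $g\in G$. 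So throughout the argument I may replace $G$ by $\prod_{i\in\N}H_i$ and $G_k$ by $\prod_{i\leq k}H_i$.

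For the forward direction, suppose $G\models\alpha$, so $\prod_{i\in\N}H_i\models\alpha$. Theorem \ref{first fv corollary} supplies a finite set $J\subseteq\N$ (with $|J|\leq n$ for an $n$ depending only on $\alpha$) such that $\prod_{i\in J'}H_i\models\alpha$ for every $J'$ with $J\subseteq J'\subseteq\N$. Specializing to $J'=\{0,1,\dots,k\}$ for any $k\geq\max J$ gives $G_k\cong\prod_{i\leq k}H_i\models\alpha$, so $G_k\models\alpha$ for all sufficiently large $k$.

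For the converse, suppose instead $G\not\models\alpha$, i.e.\ $\prod_{i\in\N}H_i\models\neg\alpha$. Applying Theorem \ref{first fv corollary} to the sentence $\neg\alpha$ yields a finite $J\subseteq\N$ with $\prod_{i\in J'}H_i\models\neg\alpha$ for all $J'$ with $J\subseteq J'\subseteq\N$; again taking $J'=\{0,\dots,k\}$ for $k\geq\max J$ gives $G_k\models\neg\alpha$ for all large $k$. Since this contradicts the hypothesis that $G_k\models\alpha$ for all sufficiently large $k$, we conclude $G\models\alpha$, completing the equivalence.

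I do not expect a genuine obstacle once the correct families are fed into Feferman--Vaught; the two points needing care are (i) checking that $G_k$ is honestly the finite product $\prod_{i\leq k}H_i$ — this is the only place orbit independence is used, and it fails without it, as the $G$ of the earlier example shows — and (ii) noting that the biconditional requires Theorem \ref{first fv corollary} applied to $\neg\alpha$ as well as to $\alpha$, since the cited form of the theorem is one-directional.
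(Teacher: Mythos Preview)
Your proposal is correct and follows essentially the same route as the paper: apply Theorem \ref{first fv corollary} to $\alpha$ for the forward direction and to $\neg\alpha$ for the converse, using that $G\cong\prod_{i\in\N}H_i$ and $G_k\cong\prod_{i\leq k}H_i$. The only difference is that you explicitly justify the isomorphism $G_k\cong\prod_{i\leq k}H_i$ from orbit independence, which the paper uses tacitly when it passes from $\prod_{i\in J'}H_i\models\alpha$ to $G_k\models\alpha$.
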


\begin{proof}
    Since $G$ has orbit independence, we have that $G\cong \prod_{i\in \N} H_i$. If $G\models \alpha$, then Theorem \ref{first fv corollary} gives that there is some finite $J\subset \N$ such that if  $J'\supseteq J$, then $\prod_{i\in J'}H_i\models \alpha$. Thus, for all $k\geq \max(J)$ we have $G_k\models \alpha$. For the other direction note that if $G\nvDash \alpha$, then $G\models \neg \alpha$ and so the same reasoning  gives that $G_k\models \neg \alpha$ for sufficiently large $k$. \qed
\end{proof}
It should be noted that the result in Corollary \ref{FV corollary} does not hold when the condition of orbit independence is dropped, as shown by the following proposition.

\begin{proposition}
    There exists a profinite subgroup of $S_\infty$ (without orbit independence) and existential formula $\alpha$ such that $G_k\models \alpha$ for all $k$, but $G\models \neg \alpha$. 
\end{proposition}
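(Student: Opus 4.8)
The plan is to realise the simplest nontrivial torsion-free profinite group, a copy of the $p$-adic integers, inside $S_\infty$ by its translation action, and to take for $\alpha$ the assertion that there is an element of order $p$. Fix a prime $p$, partition $\N$ into consecutive finite blocks $B_0,B_1,B_2,\dots$ with $|B_n|=p^{n+1}$, and identify $B_n$ with $\Z/p^{n+1}\Z$. Let $g\in S_\infty$ act on each $B_n$ as the $p^{n+1}$-cycle $x\mapsto x+1$, and set $G=\overline{\langle g\rangle}$, the closure of $\langle g\rangle$ in $S_\infty$. First I would record the basic structure: a permutation $h$ lies in $G$ exactly when it acts on each $B_n$ as translation by some $a_n$ with the $a_n$ coherent (i.e.\ $a_{n+1}\equiv a_n \Mod{p^{n+1}}$), so that $G$ is the inverse limit of the groups $\Z/p^{n+1}\Z$ along the natural quotients and hence $G\cong\Z_p$. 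Each orbit of $G$ is a single block $B_n$, so all orbits are finite; since $G$ is closed (being a closure), the earlier Proposition gives that $G$ is a profinite subgroup of $S_\infty$. As $\Z_p$ is nontrivial and torsion free, the earlier Proposition on torsion-free profinite subgroups shows that $G$ does \emph{not} have orbit independence.

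Next I would identify the approximations $G_k$. Since the blocks appear in increasing order, the canonical enumeration of the orbits gives $O_{G,i}=B_i$ for each $i$, so $G_k$ is the image of $G$ in $\mathrm{Sym}(B_0\cup\dots\cup B_k)\hookrightarrow\prod_{n=0}^{k}\mathrm{Sym}(B_n)$. By the coherence description above this image is $\{(a\bmod p,\dots,a\bmod p^{k+1}) : a\in\Z_p\}$, which, because the transition maps $\Z/p^{n+1}\Z\to\Z/p^{n}\Z$ are surjective, is isomorphic to its top quotient $\Z/p^{k+1}\Z$ and is generated by the image of $g$. Thus every $G_k$ is a nontrivial finite cyclic $p$-group. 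Now let $\alpha$ be the existential sentence $\E x\,[\,x\neq 1 \,\&\, x^{p}=1\,]$. Each $G_k\cong\Z/p^{k+1}\Z$ contains an element of order $p$ — the image of $g^{p^{k}}$ — so $G_k\models\alpha$ for every $k\in\N$; on the other hand $G\cong\Z_p$ is torsion free, hence has no element of order $p$, so $G\models\neg\alpha$. This $G$ and $\alpha$ witness the proposition.

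I do not expect a real obstacle; the only step that needs a little care is the identification $G_k\cong\Z/p^{k+1}\Z$, that is, that restricting the $\Z_p$-action to finitely many of its orbits collapses it to a single cyclic quotient — which is just the observation that the transition maps in the defining inverse system are onto, so nothing below the top quotient survives. I would also note in passing that no claim is made about $\deg(T_G)$ (the Proposition on torsion-free groups carries no effectivity hypothesis), although the presentation above is in fact computable.
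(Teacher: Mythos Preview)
Your proof is correct and follows essentially the same approach as the paper: the paper also realises the $2$-adic integers inside $S_\infty$ by letting successive orbits have sizes $2,4,8,\dots$ with a coherence condition between them, takes $\alpha$ to be the existence of an element of order $2$, and observes that each $G_k\cong\Z/2^{k+1}\Z$ has such an element while $G$ is torsion free. Your version is slightly more general (arbitrary prime $p$) and presents $G$ as the closure of a single permutation rather than building the $G_k$ inductively, but the underlying example and the sentence $\alpha$ are the same.
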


\begin{proof}
    Let $\alpha$ be the sentence that states that there is an element of order $2$. That is, $\alpha=(\E x)[x\neq 1 \, \& \, x^2= 1]$. We build a $G$ such that each $G_k$ has an element of order $2$, but $G$ has no such element. The $G$ we build will be isomorphic to the (additive) group of $2$-adic integers. 
    
    Define $O_{G,0}=\{0,1\}, O_{G,1}=\{2,3,4,5\}, O_{G,2}=\{6,7,8,9,10,11,12,13\} $ and so on.  Define $H_k$ to be the cyclic group on $O_{G,k}$ for all $k\in \N$. Define $G_0=\{(0)(1),(0\,1)\}$ and 
    define $$G_k=\{g^\frown h: g\in G_{k-1}, h\in H_k, \ord(h)\equiv \ord(g)\mod 2^{k} \}$$ for all $k>0$ (where $\ord(x)$ denotes the order of $x$). Note, each $G_k$ has exactly one element of order $2$. However, every non-identity element of $G$ will have infinite order. \qed
\end{proof}

\begin{theorem}
\label{delta 2}
    Let $G$ be a profinite subgroup of $S_\infty$ with orbit independence. The first order theory of $G$ is $\Delta^0_2$ relative to $\deg(T_G)$. 
\end{theorem}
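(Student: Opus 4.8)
The plan is to read the result straight off Corollary~\ref{FV corollary}, the only extra ingredient being the remark following Definition~\ref{G_k} that the finite groups $G_k$ are uniformly computable from $\deg(T_G)$. Consequently, if $\alpha$ is a sentence in the language of groups presented as (the code of) a finite first order formula and $G_k$ is presented by its multiplication table, then the relation ``$G_k\models\alpha$'' is decidable relative to $\deg(T_G)$, uniformly in $k$ and $\alpha$, since evaluating a fixed first order sentence in a finite structure is a bounded computation.

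First I would invoke Corollary~\ref{FV corollary}: for every first order sentence $\alpha$ in the language of groups,
$$G\models \alpha \iff (\E N)(\forall k\geq N)\,[G_k\models \alpha].$$
By the uniform decidability noted above, the matrix $[G_k\models\alpha]$ is $\deg(T_G)$-computable as a relation of $(k,N,\alpha)$, so the right-hand side exhibits $\{\alpha: G\models\alpha\}$ as a $\Sigma^0_2$ set relative to $\deg(T_G)$. Next I would apply Corollary~\ref{FV corollary} again, this time to $\neg\alpha$ in place of $\alpha$, obtaining
$$G\models \neg\alpha \iff (\E N)(\forall k\geq N)\,[G_k\models \neg\alpha].$$
Negating this characterization of $G\models\neg\alpha$ gives $G\models\alpha \iff (\forall N)(\E k\geq N)\,[G_k\models\alpha]$, which is manifestly $\Pi^0_2$ relative to $\deg(T_G)$ (equivalently, $\{\alpha:G\models\alpha\}$ is the complement, among all sentences, of a $\Sigma^0_2(\deg(T_G))$ set). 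Since the first order theory of $G$ is thus both $\Sigma^0_2$ and $\Pi^0_2$ relative to $\deg(T_G)$, it is $\Delta^0_2$ relative to $\deg(T_G)$, as claimed.

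I do not expect any genuine obstacle here: all of the substantive work has already been carried out in Corollary~\ref{FV corollary} (and hence in the Feferman--Vaught Theorem~\ref{first fv corollary}), and the passage from orbit independence to the product decomposition $G\cong\prod_{i\in\N}H_i$ used there is part of the definition of orbit independence. The only point deserving explicit mention is the uniform computability of the $G_k$ from $\deg(T_G)$, and thus the $\deg(T_G)$-decidability of the matrix ``$G_k\models\alpha$''; this was already observed after Definition~\ref{G_k}.
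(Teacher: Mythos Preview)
Your proposal is correct and follows essentially the same route as the paper: both invoke Corollary~\ref{FV corollary} once for $\alpha$ and once for $\neg\alpha$ to exhibit $\Th(G)$ and its complement as $\Sigma^0_2$ relative to $\deg(T_G)$, with the uniform $\deg(T_G)$-computability of the $G_k$ supplying decidability of the matrix. The only cosmetic difference is that you explicitly negate to display the $\Pi^0_2$ form, whereas the paper leaves it as ``the complement is $\Sigma^0_2$''.
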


\begin{proof}
    Let $\Th(G)$ denote the first order theory of $G$. By Corollary \ref{FV corollary} we have that $\alpha\in \Th(G)$ if and only if $$(\E l)(\forall k>l)[G_k\models \alpha] $$ which is  $\Sigma^0_2$ relative to $\deg(T_G)$. On the other hand, we have $\alpha\notin \Th(G)$ if and only if $$(\E l)(\forall k>l)[G_k\models\neg \alpha]$$ which is  $\Sigma^0_2$ relative to $\deg(T_G)$. Hence, both $\Th(G)$ and its complement are $\Sigma^0_2$ relative to $\deg(T_G)$ and so $\Th(G)$ is $\Delta^0_2$ relative to $\deg(T_G)$. \qed
\end{proof}

This draws a strong distinction between the complexity of theories of profinite groups with and without orbit independence. Note that by the proof of Proposition \ref{sigma 2 complete} it is possible for just the existential theory of $G$ to be $\Sigma^0_2$ complete relative to $\deg(T_G)$ when $G$ does not have orbit independence. However, the entire first order theory of $G$ will be $\Delta^0_2$ relative to $\deg(T_G)$ when $G$ has orbit independence. In contrast, bounds on the complexity of the first order theory of profinite $G$ without orbit independence remain unknown. The author hopes to investigate this question in the near future.

\section{Elementarity}
\label{Elementarity}
Last, we examine to what extent certain well defined countable subgroups form elementary subgroups of profinite groups. In particular, we will look at the subgroup of all computable elements as well as the subgroup of elements that are computable in some \emph{Turing ideal}.  

\begin{definition}
     A Turing ideal is a collection $I$ of Turing degrees such that given any $\aaa,\bbb \in I$, we have $\aaa\oplus \bbb\in I$ as well as $\ccc\in I$ for all  $\ccc\leq \aaa$.  
\end{definition}

\begin{definition}
    Given a subgroup $G$ of $S_\infty$ and a Turing ideal $I$ with $\deg(T_G)\in I$, we define $G_I$ to be the elements of $G$ whose degrees are in $I$. That is, $$G_I:=\{g\in G: \deg(g)\in I\}.$$ 
\end{definition}
Note, $G_I$ is indeed a subgroup of $G$. By downward closure $\bold 0\in I$ and so the identity is in $G_I$. For any $g$, $\deg(g\iv)=\deg(g)$ and so $g\iv \in G_I$ whenever $g\in G_I$. And given $g_1,g_2\in G_I$, we have $\deg(g_1 \circ g_2)\leq \deg(g_1)\oplus \deg(g_2)$ and thus $g_1\circ g_2\in G_I$.

Substructures containing only the computable elements of an uncountable structure have been previously examined. For example, the collection of computable real numbers $\R_{\{\bold 0\}}$ was shown to be an ordered field by Rice in \cite{Rice}. More recently Korovina and Kudinov  \cite{k and k} determined the degree spectrum of this ordered field. Given any Turing ideal $I$, the collection $\R_I$ of all $I$-computable reals also forms an ordered field. These substructures of $\R$ are examples of the much studied notion of an elementary substructure. 

\begin{definition}
    Let $\A$ be a substructure of $\B$ and let $\Gamma$ be a class of formulas. We say that $\A$ is a $\Gamma$-elementary substructure if for all formulas $\gamma\in \Gamma$ and tuples $\bar a\in \A$, $$\A \models \gamma(\bar a) \iff \B\models \gamma(\bar a).$$ We express this as $$\A\preceq_\Gamma \B.$$ If this holds for all first order formulas $\gamma$, then we simply say that $\A$ is an elementary substructure of $\B$ and write $$\A\preceq \B.$$
\end{definition}
It is not difficult to show that every $\R_I$ is a real closed field, and thus an elementary substructure of $\R$ by model completeness. In this section we look for similar results for profinite groups by investigating what conditions must be placed on $G$, $I$, and $\Gamma$ in order to have $G_I\preceq_\Gamma G$. It is of particular interest to know when we will have  $G_I\preceq G$. 

The following theorem shows that we will not always have $G_I\preceq G$.

\begin{theorem}
\label{not always elementary}
    There exists a profinite subgroup $G$ of $S_\infty$ such that $\deg(T_G)=\bold 0$, but $G_{\{\bold 0\}}$ is not an existential elementary subgroup of $G$ (where $\{\bold 0\}$ is the Turing ideal whose only element is $\bold 0$). That is, $$G_{\{\bold 0\}} \npreceq_\E G$$ where $\E$ is the class of existential first order formulas in the language of groups. 
\end{theorem}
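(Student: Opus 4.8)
The plan is to use the existential sentence $\alpha := (\E x)(x\neq1)$, so it suffices to build a profinite subgroup $G$ of $S_\infty$ with $\deg(T_G)=\bold 0$ that is nontrivial while $1$ is its only computable element, i.e.\ $G_{\{\bold 0\}}=\{1\}$; then $G\models\alpha$ but $G_{\{\bold 0\}}\not\models\alpha$. Using such a weak formula makes the statement collapse to a purely recursion-theoretic construction. First I would push everything into $2$-element orbits: let $G$ consist of the permutations that, for each $k$, either swap or fix the pair $\{2k,2k+1\}$, so that $g\mapsto\{\,k:g\text{ swaps }\{2k,2k+1\}\,\}$ is a topological isomorphism of $G$ onto a closed subgroup $V$ of $\prod_k\Z/2\Z\cong2^\omega$. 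Under this identification one checks $\deg(T_G)=\bold 0$ iff the pruned tree $\{X\upharpoonright n:X\in V\}$ of $V$ is computable, and $G_{\{\bold 0\}}=\{1\}$ iff $0$ is the only computable point of $V$. So the whole problem becomes: construct an effectively closed $\mathbb{F}_2$-subspace $V\le\mathbb{F}_2^\omega$ that is nontrivial, has a computable pruned tree, and contains no nonzero computable point. (Any such $V$ is automatically infinite dimensional, since a finite $\Pi^0_1$ class consists of isolated, hence computable, points.) The identical template with $\alpha:=(\E x)(x\neq1\,\&\,x^2=1)$ would instead produce a $G$ with an element of order $2$ but no computable one.

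To build $V$ I would run a finite-extension (no-injury) construction, carrying along a finite list of linear constraints $\langle X,v_0\rangle=\cdots=\langle X,v_j\rangle=0$ over $\mathbb{F}_2$, each $v_i$ a finite $\{0,1\}$-vector, with $V$ their common zero set. The requirement $R_e$ ($e\in\N$) is: if $\phi_e$ is total, $\{0,1\}$-valued, and not identically $0$, then some constraint $v_i$ satisfies $\langle\phi_e,v_i\rangle=1$, forcing $\phi_e\notin V$. To act for $R_e$, wait until $\phi_e$ has been seen to take value $1$ at some coordinate $m$ and has been computed on three fresh coordinates $q_1<q_2<q_3$ taken from a large block reserved privately for $R_e$; two of these, say $q_a,q_b$, carry the same value, and I would add the constraint $v:=\{m,q_a,q_b\}$. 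Since $v$ lies in a region on which $\phi_e$ is already decided, $\langle\phi_e,v\rangle=1+2\phi_e(q_a)=1$ is permanent; using the spread-out $v$ rather than the singleton $\{m\}$ is essential, since constraining $X(m)=0$ for all the coordinates $m$ that arise would collapse $V$ to $\{0\}$. If $\phi_e$ is never seen to be total and nonzero it is partial or equal to $0=1_G$, and $R_e$ needs no action.

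The hard part will be running all the $R_e$ together while keeping both (i) the pruned tree of $V$ computable and (ii) $V$ nontrivial. For (i) the trick is that the coordinates $q_1,q_2,q_3$ are always taken fresh — larger than anything used so far — so for any fixed level $n$, only the finitely many constraints added before the fresh-coordinate threshold passes $n$ can affect which length-$n$ strings lie in the pruned tree; that level therefore stabilizes after a computably bounded number of stages, which makes the domain (and hence the branching function) of $T_G$ computable. For (ii), each new constraint only makes one further coordinate dependent on two fresh private ones, so infinitely many coordinates remain free and $V$ stays infinite dimensional, in particular nonzero. A routine verification — there is no injury — then yields: every computable $X\neq0$ is some total nonzero $\phi_e$ and is thrown out of $V$ by $R_e$; $V\neq\{0\}$; and the pruned tree of $V$ is computable. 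Passing back through the reduction, $G$ is profinite with $\deg(T_G)=\bold 0$, $G\models(\E x)(x\neq1)$, and $G_{\{\bold 0\}}=\{1\}\not\models(\E x)(x\neq1)$, so $G_{\{\bold 0\}}\npreceq_\E G$. I expect essentially all of the work to be exactly this reconciliation of a computable $T_G$ with the diagonalization.
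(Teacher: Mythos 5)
There is a fatal obstruction to your plan. If $\deg(T_G)=\bold 0$ then, by definition, the domain of $T_G$ (which is the \emph{pruned} tree of initial segments of elements of $G$) is computable and every node has at least one child. Hence for \emph{every} node $\tau\in T_G$ the leftmost path of $T_G$ extending $\tau$ is computable: repeatedly search for the least $n$ with $\sigma^\frown n\in T_G$, a search that always terminates. Since $G$ is profinite, $[T_G]=i(G)$, so every node of $T_G$ extends to a computable element of $G$; that is, $G_{\{\bold 0\}}$ is automatically \emph{dense} in $G$. In particular a nontrivial $G$ with computable $T_G$ always has a nonidentity computable element, so the target of your reduction --- a nontrivial closed $V\le\mathbb{F}_2^\omega$ with computable pruned tree and no nonzero computable point --- does not exist, and the sentence $(\E x)(x\neq 1)$ (whose witness set is open, hence contains a basic clopen set, hence a computable point) can never separate $G_{\{\bold 0\}}$ from $G$. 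You are conflating an \emph{effectively closed} ($\Pi^0_1$) presentation, where diagonalizing against computable members is routine, with a computable pruned tree, where it is impossible; this is exactly the distinction the paper draws when citing Greenberg--Melnikov--Nies--Turetsky. The circularity surfaces concretely in your construction: to act, $R_e$ must wait for $\Phi_e$ to converge on the reserved coordinates $q_1<q_2<q_3$, but if $\Phi_e$ is the leftmost-path algorithm, its values there are computed by running the construction until the publication threshold passes $q_3$; by then the constraint $\{m,q_a,q_b\}$ is being imposed on already-published levels, destroying either the stabilization of the tree or the permanence of $\langle\Phi_e,v\rangle=1$.

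The paper's proof avoids this by using a formula \emph{with a parameter} whose witness set is closed but not open: it builds a computable $T_G$ together with a computable $h\in G$ such that $h$ has a square root in $G$ but no computable one, so $G\models(\E x)[x^2=h]$ while $G_{\{\bold 0\}}\models\neg(\E x)[x^2=h]$. The set of square roots of $h$ is a closed subset of $[T_G]$ whose tree of extendible nodes is only co-c.e., so the leftmost-path argument does not apply to it and diagonalization against all $\Phi_e$ becomes possible while $T_G$ itself stays computable. Your parenthetical alternative $(\E x)[x\neq 1\,\&\,x^2=1]$ is closer in spirit (its witness set is closed away from $1$), but you did not develop it, and the construction you did give cannot be repaired as stated.
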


\begin{proof}
     To prove this we build $G$ along with an element $h\in G$ such that $T_G$ and $h$ are computable, $h$ has a square root in $G$, but $h$ has no computable square root in $G$. This will give that $G\models (\E \bar x)[x^2=h]$ but $G_{\{\bold 0\}}\models \neg (\E \bar x)[x^2=h]$. 

     We construct $G$ and $h$ in stages. By \emph{level} $m$ of the tree $T_G$, we mean the layer of the tree that shows where $m\in \N$ is being mapped to by the elements of $G$. The tree $T_G$ will be two branching at all levels $m_n$ (where $\{m_n\}_{n\in \N}$ will be a computable sequence in $\N$) and either one or two branching at all  other levels. Given $\sigma, \tau \in T_G$, we say that $\sigma$ is to the left of $\tau$ if $\tau$ comes before $\sigma$ lexicographically. We will assure that no computable path through $T_G$ will give a square root of $h$. To do this we will have that for each $n\in \N$ \begin{itemize}
        \item If $\Phi_n(m_n)=0$, then no path that goes left at level $m_n$ will give a square root of $h$; and 
        \item If $\Phi_n(m_n)=1$, then no path that goes right at level $m_n$ will give a square root of $h$.
    \end{itemize}
In the case where $\Phi_n(m_n)\neq 0,1$ or $\Phi_n(m_n)\uparrow$, it will not matter which direction a path goes at level $m_n$. We will build $G$ orbit by orbit, defining $G$ as the limit of all $G_k$.

\paragraph{Construction:} For all $n\in \N$, define $l_n=n(n+1)/2$. Stages of the form $l_n + e+1$ (with $e+1<l_{n+1}-l_n$) will be dedicated to making sure that $\Phi_e$ does not compute a square root of $h$. For all $n\in \N$, define $m_n$ to be the smallest natural number not in $O_i$ for any $i\leq n$.

\paragraph{Stage $0$:} Define $O_0=\{0,1\}$, $G_0=\{(0)(1),(0\,1)\}$, and $h_0=(0)(1)$.

\paragraph{Stage $s+1=l_n$:} Define $O_{s+1}=\{m_s,m_s+1\}$, $$G_{s+1}=\{g^\frown (m_s)(m_s+1), g^\frown(m_s\,\,\, m_s+1): g\in G_s\},$$ and $h_{s+1}=h_s^\frown (m_s)(m_s+1)$. 

\paragraph{Stage $s+1=l_n+ e+1$:} What we do at this stage will depend on if $\Phi_e(m_e)$ has halted yet.  
\begin{itemize}
    \item If $\Phi_{e,s}(m_e)\uparrow$ or $\Phi_{e,s}(m_e)\downarrow\neq 0,1$, then define $O_{s+1}=\{m_s,m_s+1\}$, $$G_{s+1}=\{g^\frown (m_s)(m_s+1), g^\frown(m_s\,\,\, m_s+1): g\in G_s\},$$ and $h_{s+1}=h_s^\frown (m_s)(m_s+1)$. 
    \item Otherwise, we expand $G$ so that it will act as $\Z/4\Z$ at this level. Define $O_{s+1}=\{m_s,m_s+1,m_s+2,m_s+3\}$. We can think of the four permutations $(m_s)(m_s+1)(m_s+2)(m_s+3)$, $(m_s\,\,\, m_s+1\,\,\,m_s+2\,\,\,m_s+3)$, $(m_s\,\,\,m_s+2)(m_s+1\,\,\,m_s+3)$, and $(m_s\,\,\,m_s+3\,\,\,m_s+2\,\,\,m_s+1)$ as the elements $0,1,2,$ and $3$ in $\Z/4\Z$ respectively. We will have that for every $g\in G_s$, $g$ will be extended either by $(m_s)(m_s+1)(m_s+2)(m_s+3)$ and $(m_s\,\,\,m_s+2)(m_s+1\,\,\,m_s+3)$ or by $(m_s\,\,\, m_s+1\,\,\,m_s+2\,\,\,m_s+3)$ and $(m_s\,\,\,m_s+3\,\,\,m_s+2\,\,\,m_s+1)$. In particular, we will extend $g$ by $(m_s)(m_s+1)(m_s+2)(m_s+3)$ and $(m_s\,\,\,m_s+2)(m_s+1\,\,\,m_s+3)$ if and only if it goes left at level $m_e$.   That is, \begin{align*}
    G_{s+1}=\{&g^\frown (m_s)(m_s+1)(m_s+2)(m_s+3), g^\frown (m_s\,\,\,m_s+2)(m_s+1\,\,\,m_s+3),\\
    &g'^\frown(m_s\,\,\, m_s+1\,\,\,m_s+2\,\,\,m_s+3), g'^\frown(m_s\,\,\,m_s+3\,\,\,m_s+2\,\,\,m_s+1\,\,\,):\\
    &g,g'\in G_s \,\&\, g\upharpoonright O_e=(m_e)(m_e+1), g'\upharpoonright O_e=(m_e\,\,\, m_e+1)\}.
    \end{align*}
    If $\Phi_{e,s}(m_e)=0$, define $h_{s+1}=h_s^\frown(m_s\,\,\, m_s+2)(m_s+1\,\,\,m_s+3)$. If $\Phi_{e,s}(m_e)=1$, define $h_{s+1}=h_s^\frown(m_s)(m_s+1)(m_s+2)(m_s+3)$. 
\end{itemize}

\paragraph{Verification:} Since each $G_k$ is computable, it is clear that the tree $T_G$ is computable. Similarly, since each $h_k$ is computable, so is $h=\lim_k h_k$. 

Suppose towards a contradiction that $h$ had some computable square root $g$. There would have to be some $\Phi_e$ such that $\Phi_e(m_n)=0$ for all $n$ such that the path through $T_G$ defining $g$ goes left at level $m_n$, and $\Phi_e(m_n)=1$ otherwise.  Suppose we had such a $\Phi_e$. There will come some stage $s+1=m_n+ e+1$ for some $n$ with $s$ large enough so that $\Phi_{e,s}(m_e)\downarrow$. 

\begin{itemize}
    \item If $\Phi_{e,s}(m_e)=0$, then we will have that $g(m_e)=m_e$. However, our instruction for stage $s+1$ will make it so all paths $p\in [T_G]$ with $p(m_e)=m_e$ have $p(m_s)=m_s$ or $p(m_s)=m_s+2$. Our instructions will also have $h(m_s)=m_s+2$. Thus no such $p$, including $g$, can be a square root of $h$.
    \item If $\Phi_{e,s}(m_e)=1$, then we will have that $g(m_e)=m_e+1$. However, our instruction for stage $s+1$ will make it so all paths $p$ with $p(m_e)=m_e+1$ have $p\upharpoonright O_{s+1}=(m\,\,\, m+1\,\,\, m+2\,\,\, m+3)$ or $p\upharpoonright O_{s+1}=(m\,\,\,m+3\,\,\,m+2\,\,\,m+1)$. Our instructions will also have $h(m_s)=m_s$. Thus no such $p$, including $g$, can be a square root of $h$.
\end{itemize}
Hence, $g$ is not a square root of $h$ and we get that $h$ has no computable square root. 

We need only show that $h$ does indeed have some square root in $G$. Consider the path $p$ that goes right at all levels $m_n$ such that $\Phi_n(m_n)=0$, and goes left at all other levels. Note that $$h\upharpoonright O_{s+1}=\begin{cases}
    (m_s\,\,\,m_s +2)(m+1\,\,\,m+3) &; s+1=l_n+e+1 \,\& \, \Phi_{e,s}(m_e)=0\\
    1_{S_{O_{s+1}}} &; \text{otherwise}
    
\end{cases}$$  
and $$p\upharpoonright O_{s+1}=\begin{cases}
    (m_s\,\,\,m_s +1\,\,\,m+2\,\,\,m+3) &; s+1=l_n+e+1 \,\& \, \Phi_{e,s}(m_e)=0\\
    1_{S_{O_{s+1}}} &; \text{otherwise.}\end{cases}$$
    Hence, we have $p^2=h$. \qed
\end{proof}

It should be noted that the group $G$ used in the above proof does not have orbit independence and that the Turing ideal $\{\bold 0\}$ is not what is called a Scott ideal (defined below). As we will see from the next two propositions, if $G$ has orbit independence or if $I$ is a Scott ideal then we will have $G_I\preceq_\E G$.

\begin{proposition}
    Let $G$ be a profinite subgroup of $S_\infty$ with orbit independence. Given any Turing ideal $I$ with $\deg(T_G)\in I$, we must have $G_I\preceq_\E G$. 
\end{proposition}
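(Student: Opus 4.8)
The plan is to reduce the problem about the uncountable group $G$ to a statement about the finite groups $G_k$, using orbit independence and the Ło\'s--Tarski preservation theorem, exactly as in the proof of Lemma~\ref{and k}. Since $G_I$ is a substructure of $G$, existential formulas are automatically preserved upward: if $G_I\models\alpha(\bar a)$ for an existential $\alpha$ and $\bar a\in G_I\lom$, then $G\models\alpha(\bar a)$. So the content is the converse direction.

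For the converse, suppose $\alpha=\E\bar x\,\beta(\bar x,\bar y)$ with $\beta$ quantifier free, and suppose $\bar a\in G_I\lom$ with $G\models\alpha(\bar a)$, witnessed by some $\bar g\in G\lom$, i.e.\ $G\models\beta(\bar g,\bar a)$. By Lemma~\ref{no orb ind} there is a $k$ with $G_k\models\beta(\bar g_k,\bar a_k)$, so in particular $G_k\models\alpha(\bar a_k)$. Now I would use orbit independence: as in Lemma~\ref{and k}, define the embedding $f:G_k\to G$ sending $\bar\gamma\mapsto$ the element agreeing with $\bar\gamma$ on the first $k$ orbits and equal to the identity on all $O_{G,l}$ with $l>k$; orbit independence guarantees $f(\bar\gamma)\in G$. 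The point is that $f$ restricted to the (finite!) subgroup $G_k$ lands in $G_I$: every element $f(\bar\gamma)$ has a computable description — it is determined by the finite data $\bar\gamma$ together with $T_G$ — so $\deg(f(\bar\gamma))\le\deg(T_G)\in I$, hence $f(\bar\gamma)\in G_I$. Moreover $f(\bar a_k)=\bar a$ itself, since $\bar a\in G_I$ already has degree in $I$ and — wait, this needs a small argument: $f(\bar a_k)$ need not equal $\bar a$ because $\bar a$ may be nontrivial on higher orbits.

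To handle this cleanly I would instead argue directly that $G_I$ contains a copy of a finite group through which $\alpha(\bar a)$ factors together with $\bar a$. Concretely: because $G$ has orbit independence, $G\cong\prod_k H_k$, and the witness $\bar g$ can be truncated. Fix $k$ large enough that $G_k\models\beta(\bar g_k,\bar a_k)$; let $\bar g'$ be the element of $G$ that agrees with $\bar g$ on the first $k$ orbits and is the identity above (well-defined by orbit independence), and similarly this is the key: I want a witness in $G_I$, and $\bar g'$ depends only on $\bar g_k$ and $T_G$, so $\deg(\bar g')\le\deg(T_G)\oplus\deg(\bar g_k)=\deg(T_G)\in I$ (the finite object $\bar g_k$ is computable). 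Thus $\bar g'\in G_I\lom$. It remains to check $G\models\beta(\bar g',\bar a)$, equivalently (by Lemma~\ref{no orb ind}) that $G_l\models\beta(\bar g'_l,\bar a_l)$ for all large $l$. For $l\ge k$ we have, under the identification $G\cong\prod H_i$, that $\beta(\bar g',\bar a)$ evaluated coordinatewise: on coordinates $\le k$ it agrees with $\beta(\bar g_k,\bar a_k)$ which holds in $G_k$, and on coordinates in $(k,l]$ the entries of $\bar g'$ are trivial, so we must also know $\beta$ holds there with $\bar g'$ trivial. This is where orbit independence is doing real work and where I expect the main obstacle: $\beta$ being a conjunction/disjunction of equations $W(\bar x,\bar y)=1$, the coordinatewise structure of $\prod H_i$ means $G\models W(\bar g',\bar a)=1$ iff $W(\bar g'\!\restriction O_i,\bar a\!\restriction O_i)=1$ in $H_i$ for every $i$; for $i\le k$ this follows from $G_k\models\beta(\bar g_k,\bar a_k)$, and for $i>k$ we are comparing against the original witness.

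The cleanest route — and the one I would actually write — is therefore the following. Since $G\models\beta(\bar g,\bar a)$, by orbit independence and the coordinatewise semantics of quantifier-free formulas in a product, for every $i$ we have $H_i\models\beta(\bar g\!\restriction O_i,\bar a\!\restriction O_i)$. Pick $k$ with $G_k\models\beta(\bar g_k,\bar a_k)$ (Lemma~\ref{no orb ind}); now I do \emph{not} truncate $\bar g$, instead I keep $\bar g$ on the first $k$ orbits and on each orbit $O_i$ with $i>k$ I keep exactly the restriction $\bar g\!\restriction O_i$ — i.e.\ I keep $\bar g$ entirely but re-examine its complexity. That fails since $\bar g$ need not be computable. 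So the correct move: on orbits $i>k$, replace $\bar g\!\restriction O_i$ by the identity only if $\beta$ still holds there with the identity; but $\beta$ holding on $O_i$ with $\bar a\!\restriction O_i$ and \emph{some} group element forces, for purely equational $\beta$ in the finite group $H_i$, that it holds — this is false in general. Hence the honest argument must be: use Lemma~\ref{and k} to get $G\models\alpha$ \emph{as a sentence with parameters among the $\bar a$} is not enough; we need the parameters $\bar a$ themselves to survive. The resolution is that $\bar a\in G_I$ and $G_I\models\alpha(\bar a)$ is exactly what we want, so I should find a witness $\bar w\in G_I$ with $G_I\models\beta(\bar w,\bar a)$, equivalently $G\models\beta(\bar w,\bar a)$ (a $\Sigma_0$ statement, absolute between $G$ and its subgroup $G_I$). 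Take $\bar w$ to agree with $\bar g$ on orbits $\le k$ and with $\bar a$-determined values above: on each $O_i$, $i>k$, let $\bar w\!\restriction O_i$ be \emph{any} element of $H_i$ witnessing $H_i\models\beta(\cdot,\bar a\!\restriction O_i)$ that is chosen by the least-index search relative to $T_G$ — this makes $\bar w$ computable from $T_G\oplus\bar a\in I$ (since finitely many $\bar a$ parameters and $T_G$ suffice, and on each orbit the search is finite), hence $\bar w\in G_I$; and $G\models\beta(\bar w,\bar a)$ by coordinatewise checking. The main obstacle, then, is verifying that such a uniform least-witness choice is (i) total — it is, since $\bar g$ itself provides a witness on each orbit — and (ii) genuinely computable from $T_G\oplus\deg(\bar a)\in I$, which uses that $H_i$ and its action are uniformly computable from $T_G$ (noted after Definition~\ref{G_k}) and that orbit independence makes the coordinatewise witness assemble into an actual element of $G$. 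Once that is in place, $\bar w\in G_I\lom$ and $G_I\models\beta(\bar w,\bar a)$, so $G_I\models\alpha(\bar a)$, completing the nontrivial direction.
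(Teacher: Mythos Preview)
Your proposal contains a genuine gap in the final ``cleanest route.'' The two key claims it rests on are both false for general quantifier-free $\beta$:

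\textbf{(a)} You assert that ``by orbit independence and the coordinatewise semantics of quantifier-free formulas in a product, for every $i$ we have $H_i\models\beta(\bar g\!\upharpoonright O_i,\bar a\!\upharpoonright O_i)$,'' and use this to argue that the least-index search on each $H_i$ is total. But quantifier-free formulas do \emph{not} have coordinatewise semantics in a product. Take $\beta(x,a)\equiv(x\neq a)$: if $G\models g\neq a$, it need not be that $g\!\upharpoonright O_i\neq a\!\upharpoonright O_i$ in every $H_i$ (indeed, whenever $H_i$ is trivial it certainly fails). So $\bar g$ does \emph{not} provide a witness on each orbit, and your search may diverge.

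\textbf{(b)} Even if you found $w_i\in H_i$ with $H_i\models\beta(w_i,\bar a\!\upharpoonright O_i)$ for every $i$, gluing them does not give $G\models\beta(\bar w,\bar a)$. With $\beta(x)\equiv(x^2=1)\vee(x^3=1)$, $H_0=\Z/2\Z$, $H_1=\Z/3\Z$, the nontrivial element in each factor satisfies $\beta$, but their join in $H_0\times H_1$ has order $6$ and does not.

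The point is that only \emph{atomic} formulas $W=1$ are genuinely coordinatewise; negated atoms $W\neq 1$ need just \emph{one} coordinate to fail, and disjunctions mix the two behaviours. The paper handles this by first putting $\beta$ in disjunctive normal form and fixing a single disjunct $\alpha_i=\bigwedge_j\alpha_{i,j}^+\wedge\alpha_{i,j}^-$ that holds. It then searches in the \emph{cumulative} groups $G_k$ (not the single orbits $H_k$) for witnesses $\bar\gamma_k$ to $\alpha_i(\cdot,\bar a_k)$, for all $k\ge L$, chosen $I$-computably; the glued element $\bar g$ agrees with $\bar\gamma_L$ below $L$ and with $\bar\gamma_k\!\upharpoonright O_{G,k}$ on orbit $k$ for $k>L$. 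The atomic conjuncts $W=1$ then hold orbit-by-orbit (each orbit is covered by some $\bar\gamma_k$ that satisfies all of $\alpha_i$ in $G_k$), while the negated conjuncts $W\neq 1$ are secured once and for all at level $L$ since $\bar g_L=\bar\gamma_L$. Your argument is close in spirit, but the DNF step and the switch from $H_i$ to $G_k$ are exactly what is missing.
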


\begin{proof}
    We must show that given any quantifier free formula $\alpha$ and any tuple $\bar a\in G_I\lom$, $$G_I\models (\E\bar x)\alpha(\bar x,\bar a) \iff G \models(\E\bar x)\alpha(\bar x,\bar a).$$ Note, the left to right direction is trivial. Thus we need only show that if $G\models (\E\bar x)\alpha(\bar x,\bar a)$, then there exists a witness $\bar g$ whose degree is in $I$. 

    Suppose we have $G\models (\E \bar x)\alpha(\bar x,\bar a)$. Let $\alpha\equiv \bigvee_i \alpha_i$ where each $\alpha_i$ has the form $$\alpha_i(\bar x,\bar y)=\bigwedge_j\alpha_{i,j}^+(\bar x,\bar y) \,\&\, \alpha_{i,j}^-(\bar x,\bar y)$$ where each $\alpha_{i,j}^+$  is a (quantifier free) positive formula and each $\alpha_{i,j}^-$ is a (quantifier free) negative formula. Fix an $i$ such that $G\models (\E \bar x)\alpha_i(\bar x,\bar a)$. By Lemma \ref{no orb ind}, we have that $G_k\models (\E \bar x)\alpha_i(\bar x,\bar a_k)$ for all sufficiently large $k\in \N$. Thus there exists some $L\in \N$ such that for all $k\geq L$, there is some $\bar\gamma_k\in G_k\lom$ such that $G_k\models \alpha_i(\bar \gamma_k,\bar a_k)$. Note that since all elements of $\bar a$ have degree in $I$ and $\deg(T_G)\in I$, we have that there is such a sequence $\{\gamma_k\}_{L\leq k\in \N}$ that is computable in $I$. 

    Define $\bar g$ such that $\bar g_L=\bar \gamma_L$ and  $\bar g_k\upharpoonright O_{G,k}= \bar \gamma_k\upharpoonright O_{G,k}$ for all $k>L$. Since $G$ has orbit independence we get that $\bar g\in G\lom$. Additionally since $\{\bar \gamma_k\}$ is computable in $I$, the degree of $\bar g$ is in $I$ as well and so $\bar g\in G_I\lom$. Note that since $G_L\models\bigwedge_j \alpha^-_{i,j}(\bar \gamma_L,\bar a_L)$, we have that $G_I\models \bigwedge_j \alpha_{i,j}^-(\bar g,\bar a)$. Additionally since $G_k\models \bigwedge_j \alpha_{i,j}^+(\bar \gamma_k,\bar a_k)$ for all $k\geq L$, we have that $G_I\models \bigwedge \alpha_{i,j}^+(\bar g,\bar a)$. Hence, $G_I\models \alpha_i(\bar g,\bar a)$ and so  $G_I\models (\E \bar x)\alpha(\bar x,\bar a)$. \qed
\end{proof}

\begin{definition}
    A Scott ideal is a Turing ideal $I$ such that given any $\aaa\in I$, there exists $\bbb\in I$ that is PA relative to $\aaa$.
\end{definition}
Scott ideals will be of particular interest to us because of the following well known proposition. 
\begin{proposition}

   A degree $\bbb$ is PA relative to a degree $\aaa$ if and only if $\bbb$ computes a path through every $\aaa$-computable finite branching infinite tree. \qed
\end{proposition}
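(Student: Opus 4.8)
The plan is to reduce the statement to the familiar binary-tree characterization of PA degrees. I will use the standard fact that $\bbb$ is PA relative to $\aaa$ if and only if $\bbb\ge_T\aaa$ and $\bbb$ computes an infinite path through every infinite $\aaa$-computable subtree of $2\lom$; here a ``tree'' is allowed to have terminal nodes and a ``path'' means an infinite path, which exists by K\"onig's lemma. By an ``$\aaa$-computable finite branching tree'' I mean, in keeping with this paper's notion of the degree of $T_G$, a tree $T\subseteq\N\lom$ in which every node has finitely many immediate successors and for which $\aaa$ computes both the node set and the branching function. Note that $\aaa$ can then list, level by level, the finite set of nodes of $T$ of each length, and hence compute a function $b$ such that the $n$th coordinate of every node of $T$ is less than $b(n)$.

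For the backward direction I would argue as follows. First, $\bbb\ge_T\aaa$: picking $A$ of degree $\aaa$, the tree $\{A\restriction n:n\in\N\}$ is $\aaa$-computable, infinite, and $1$-branching, so its unique path $A$ is computable from $\bbb$. Second, given any nonempty $\Pi^{0,\aaa}_1$ class $P\subseteq 2^\omega$, write $P=[T]$ for an $\aaa$-computable $T\subseteq 2\lom$; then $T$ is infinite (since $P\neq\0$) and binary, hence an $\aaa$-computable finite branching infinite tree, so by hypothesis $\bbb$ computes a path through $T$, i.e.\ a member of $P$. By the characterization above, $\bbb$ is PA relative to $\aaa$. (Allowing terminal nodes is essential here: a pruned computable tree always has a computable path, namely its leftmost one, so under the ``pruned'' reading the right-hand side would be trivially true.)

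For the forward direction, suppose $\bbb$ is PA relative to $\aaa$ (so in particular $\bbb\ge_T\aaa$) and let $T$ be an $\aaa$-computable finite branching infinite tree. Using $b$ I have $\aaa$ compute the widths $w(n)=\lceil\log_2(b(n)+1)\rceil$ and build an $\aaa$-computable tree $S\subseteq 2\lom$ in which the $n$th entry of a would-be path of $T$ is recorded in a binary block of width $w(n)$: a binary string is placed in $S$ exactly when its maximal prefix of complete blocks decodes to a node $(v_0,\dots,v_{k-1})$ of $T$ and its remaining (partial) block can be extended to a block $v_k$ with $(v_0,\dots,v_k)\in T$ — a condition $\aaa$ can decide since the successors of a node form a finite, $\aaa$-computably listable set. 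Then $S$ is $\aaa$-computable and infinite, and stripping the block encoding gives an $\aaa$-computable bijection between $[S]$ and $[T]$. Since $\bbb$ is PA relative to $\aaa$ it computes an infinite path through $S$, and decoding that path (using $\aaa\le_T\bbb$) yields a path through $T$.

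The point that needs care is entirely in the forward direction: one must verify that finiteness of branching really does let $\aaa$ reconstruct $T$ level by level and compute the bounds $b(n)$ — so that the paths of $T$ form a ``bounded'' subset of $\N^\omega$ — and that the block encoding is faithful, meaning $S$ is a genuine $\aaa$-computable tree whose infinite paths decode exactly onto the infinite paths of $T$, with no extra paths introduced (in particular, a branch of $T$ that runs into a leaf must contribute no infinite path of $S$). Everything else is immediate or is the cited characterization of PA degrees.
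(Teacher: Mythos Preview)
The paper does not actually prove this proposition: it is stated with a \qed\ as a well-known fact, with no argument given. So there is no proof in the paper for you to be compared against.

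Your argument is correct and is the standard one. The backward direction is clean: the $1$-branching tree of initial segments of a set of degree $\aaa$ forces $\bbb\ge_T\aaa$, and any $\aaa$-computable subtree of $2\lom$ is in particular an $\aaa$-computable finite branching tree (with $\aaa$-computable branching function, since one just checks which of $\tau0,\tau1$ lie in the tree). The forward direction via binary block encoding is also right; your care about dead ends---requiring the partial block to be extendible to an actual successor in $T$---is exactly what guarantees that terminal nodes of $T$ produce terminal nodes of $S$, so that $[S]$ and $[T]$ correspond.

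Your parenthetical about terminal nodes is a genuinely useful clarification in this paper's context. Since $T_G$ is always pruned here, a reader might default to the ``no dead ends'' reading of \emph{tree}; but under that reading the leftmost path of an $\aaa$-computable finite branching tree is already $\aaa$-computable, and the equivalence would collapse. So the proposition must be read as allowing dead ends, exactly as you say.
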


\begin{proposition}
    Let $G$ be any profinite subgroup of $S_\infty$. If $I$ is a Scott ideal with $\deg(T_G)\in I$, then $G_I\preceq_\E G$. 
\end{proposition}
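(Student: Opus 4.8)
The plan is to mimic the proof of the orbit-independence version, replacing the use of orbit independence (which let us glue together the finite witnesses $\bar\gamma_k$ into a single element of $G$) with the fact that a Scott ideal contains PA-over-$\deg(T_G)$ degrees, which by the preceding proposition can compute a path through any $\deg(T_G)$-computable finite branching infinite tree. As before, the nontrivial direction is: if $G\models(\E\bar x)\alpha(\bar x,\bar a)$ with $\alpha$ quantifier free and $\bar a\in G_I\lom$, produce a witness whose degree lies in $I$. Write $\alpha\equiv\bigvee_i\alpha_i$ with each $\alpha_i\equiv\bigwedge_j(\alpha_{i,j}^+\,\&\,\alpha_{i,j}^-)$ a conjunction of quantifier-free positive and negative formulas, and fix $i$ with $G\models(\E\bar x)\alpha_i(\bar x,\bar a)$.

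First I would use Lemma \ref{no orb ind} to get $L\in\N$ and, for each $k\geq L$, a tuple $\bar\gamma_k\in G_k\lom$ with $G_k\models\alpha_i(\bar\gamma_k,\bar a_k)$ — and moreover, since $\deg(T_G)$ and the $\bar a$'s all lie in $I$, I can take the sequence $\{\bar\gamma_k\}_{k\geq L}$ to be $I$-computable (exactly as in the orbit-independence proof). Now build an auxiliary tree $S$: its nodes are (codes for) tuples $\bar\tau\in G_k\lom$ for $k\geq L$, ordered by componentwise extension, with the single additional requirement that $G_{l(\bar\tau)}\models\bigwedge_j\alpha_{i,j}^-(\bar\tau,\bar a_{l(\bar\tau)})$ (the negative part is downward-persistent by Corollary \ref{cor}, so this really is a tree). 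This $S$ is $\deg(T_G)$-computable (the branching function of $T_G$ lets us enumerate each $G_k$, and checking the negative quantifier-free condition on a finite structure is decidable), it is finite branching, and it is infinite — the tails $\bar\gamma_k\upharpoonright\bigcup_{i\le k}O_{G,i}$ for large $k$ need not literally form a path, so instead I argue infinitude via König/compactness: for each $k\geq L$ the truncation of $\bar\gamma_k$ to the first $m$ orbits (for $L\le m\le k$) satisfies the negative part by Corollary \ref{cor} and hence lies in $S$, giving nodes of unbounded length; since $S$ is finite branching it therefore has an infinite path.

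Next, since $I$ is a Scott ideal and $\deg(T_G)\in I$, pick $\bbb\in I$ that is PA over $\deg(T_G)$; by the quoted proposition $\bbb$ computes a path $\bar g$ through $S$. Then $\bar g\in G\lom$ (it is a path through $T_G$ in each coordinate), and $\deg(\bar g)\le\bbb\in I$, so $\bar g\in G_I\lom$. The negative conjuncts $\alpha_{i,j}^-(\bar g,\bar a)$ hold because every node of $S$ satisfies them and quantifier-free negative truth is determined by a finite initial segment (Lemma \ref{no orb ind} applied to $\neg\beta$ for each atomic $\beta$). For the positive conjuncts I still need that $G\models\alpha_{i,j}^+(\bar g,\bar a)$; this is where I must be a little careful, since a path through $S$ only guarantees the negative part.

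The main obstacle is precisely securing the positive conjuncts for the path $\bar g$ that the PA degree hands us — unlike in the orbit-independence case, we cannot choose $\bar g$ to agree orbit-by-orbit with the $\bar\gamma_k$'s. The fix is to fold the positive requirements into the tree itself: refine $S$ so that a tuple $\bar\tau\in G_k\lom$ is admitted only if additionally $G_k\models\bigwedge_j\alpha_{i,j}^+(\bar\tau,\bar a_k)$. This is \emph{not} downward closed, but it \emph{is} ``eventually'' satisfied — by Lemma \ref{no orb ind} and the fact that positive quantifier-free formulas go down along the projections (Lemma \ref{positive goes down}), whenever $G_k\models\alpha_i(\bar\gamma_k,\bar a_k)$ we get $G_m\models\bigwedge_j\alpha_{i,j}^+(\bar\gamma_k\!\restriction\!,\bar a_m)$ for \emph{all} $m\le k$, i.e. the positive part, like the negative part, is inherited by all truncations. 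So in fact the single condition ``$G_{l(\bar\tau)}\models\alpha_i(\bar\tau,\bar a_{l(\bar\tau)})$'' is inherited by truncations (positive parts go down by Lemma \ref{positive goes down}, negative parts by Corollary \ref{cor}), making $S':=\{\bar\tau: G_{l(\bar\tau)}\models\alpha_i(\bar\tau,\bar a_{l(\bar\tau)})\}$ a $\deg(T_G)$-computable finite-branching tree that is infinite (it contains all truncations of all $\bar\gamma_k$), and any path $\bar g$ through $S'$ satisfies $G_k\models\alpha_i(\bar g_k,\bar a_k)$ for all $k\ge L$, hence $G\models\alpha_i(\bar g,\bar a)$ by Lemma \ref{no orb ind}, hence $G_I\models(\E\bar x)\alpha(\bar x,\bar a)$ since $\deg(\bar g)\in I$. \qed
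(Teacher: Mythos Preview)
There is a genuine gap: you have the direction of Corollary \ref{cor} reversed. That corollary says that \emph{negative} formulas persist \emph{upward} along the projections (if $G_k\models\alpha^-(\bar g_k)$ then $G_l\models\alpha^-(\bar g_l)$ for $l>k$), not downward. Truncation preserves \emph{positive} formulas (Lemma \ref{positive goes down}) but can destroy negative ones: if $G_k\models\bar\gamma_k\neq 1\,\&\,\bar\gamma_k^{2}=1$ and the only orbit on which $\bar\gamma_k$ acts nontrivially lies between levels $m$ and $k$, then $(\bar\gamma_k)_m=1_{G_m}$ and the negative conjunct fails. Consequently neither your $S$ (negative condition only) nor your $S'$ (full condition at the top level) is closed under initial segments, so neither is a tree; and your infinitude argument (``all truncations of all $\bar\gamma_k$ lie in the tree'') fails for the same reason.

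The paper's repair is small but essential: rather than working with the incoherent sequence $\{\bar\gamma_k\}$, fix a single witness $\bar g\in G\lom$ with $G\models\alpha(\bar g,\bar a)$ and apply Lemma \ref{no orb ind} to it, obtaining $L$ with $G_k\models\alpha(\bar g_k,\bar a_k)$ for all $k\geq L$. Then root the tree at the finite object $\bar g_L$ and require the full condition $G_{l(\bar\sigma)}\models\alpha(\bar\sigma,\bar a_{l(\bar\sigma)})$ at \emph{every} level between $\bar g_L$ and $\bar\tau$. Closure under predecessors is now automatic from the definition, and infinitude is witnessed by the coherent sequence $(\bar g_k)_{k\geq L}$ coming from the single $\bar g$. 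The rest of your argument (finite branching, $I$-computability of the tree, applying the PA-degree proposition to extract $\bar g'\in G_I\lom$, and concluding via Lemma \ref{no orb ind}) then goes through exactly as you wrote it. Note too that the decomposition into positive and negative parts is unnecessary once you set things up this way: the paper just uses $\alpha$ directly.
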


\begin{proof}
  We must show that given any quantifier free formula $\alpha$ and any tuple $\bar a\in G_I\lom$, $$G_I\models (\E\bar x)\alpha(\bar x,\bar a) \iff G \models(\E\bar x)\alpha(\bar x,\bar a).$$ We again have that the left to right direction is trivial. Thus we need only show that if $G\models (\E\bar x)\alpha(\bar x,\bar a)$, then there exists a witness $\bar g$ whose degree is in $I$. 

    Suppose that $G\models (\E \bar x)\alpha(\bar x,\bar a)$. Fix some $\bar g\in G\lom$ such that $G\models \alpha(\bar g,\bar a)$. By Lemma \ref{no orb ind} we have that there exists an $L\in \N$ such that $G_k\models \alpha(\bar g_k,\bar a_k)$ for all $k\geq L$. Define $$T_{\alpha,\bar a}=\left\{\bar \tau\in T_G^{|\bar g|}: \bar g_L\sqsubseteq \bar \tau \,\&  \bigwedge_{\bar g_L \sqsubseteq \bar \sigma \sqsubseteq \bar \tau} G_{\text{lv}(\bar \sigma)}\models \alpha\left(\bar \sigma, \bar a_{\text{lv}(\bar \sigma)}\right) \right\} $$ where $\text{lv}(\bar \sigma)$ is the natural number $k$ such that $\bar \sigma \in G_k\lom$. Note that $T_{\alpha,\bar a}$ is a finite branching tree with $\bar g_L$ as a root, and that it must be infinite (in particular, $\bar g$ gives a path). Additionally, since $T_G$ and $\bar a$ are computable in $I$ we have that $T_{\alpha,\bar a}$ is computable in $I$. Thus, since $I$ is a Scott ideal, there must be an $I$ computable path through $T_{\alpha,\bar a}$ and therefore a $\bar g'\in G_I\lom$ such that $G_k\models \alpha(\bar g'_k,\bar a_k)$ for all $k\geq L$. By Lemma \ref{no orb ind} we have that $G\models \alpha(\bar g', \bar a)$ and thus $G_I\models (\E \bar x)\alpha(\bar x,\bar a) $. \qed 
\end{proof}

Last, we show that when $G$ has orbit independence and $I$ is Scott ideal then $G_I\preceq G$. To prove this we will first establish a lemma which expands upon Corollary \ref{FV corollary}.

\begin{lemma}
\label{FV with free variables}
    Let $G$ be a profinite subgroup of $S_\infty$ with orbit independence. Given any $\bar a\in G\lom$ and first order sentence $\alpha$ in the language of groups, $G\models \alpha (\bar a)$ if and only if $G_k\models \alpha(\bar a_k)$ for all sufficiently large $k\in \N$. 
\end{lemma}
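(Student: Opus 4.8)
The plan is to deduce this from Corollary~\ref{FV corollary} by absorbing the parameters $\bar a$ into the language. Write $\bar a=(a^{(1)},\dots,a^{(r)})$ and let $\LL^+$ be the language of groups together with $r$ new constant symbols $c_1,\dots,c_r$. Using the orbit independence isomorphism $G\cong\prod_{i\in\N}H_i$ (under which $g$ corresponds to $(g\upharpoonright O_{G,i})_{i\in\N}$), expand each factor to the $\LL^+$-structure $H_i^+:=(H_i,\,a^{(1)}\upharpoonright O_{G,i},\dots,a^{(r)}\upharpoonright O_{G,i})$. Since the direct product of $\LL^+$-structures interprets each $c_j$ coordinatewise, the group reduct of $\prod_{i\in\N}H_i^+$ is $\prod_{i\in\N}H_i\cong G$ and the constants name exactly the coordinates of $\bar a$; thus $\prod_{i\in\N}H_i^+\cong(G,\bar a)$ as $\LL^+$-structures. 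Likewise, because $G$ has orbit independence, the restriction map $g_k\mapsto(g\upharpoonright O_{G,i})_{i\le k}$ is a surjective isomorphism $G_k\cong\prod_{i\le k}H_i$ carrying $\bar a_k$ to $(a^{(j)}\upharpoonright O_{G,i})_{i\le k}$, so $\prod_{i\le k}H_i^+\cong(G_k,\bar a_k)$ as $\LL^+$-structures.

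Next I would let $\alpha^+$ be the $\LL^+$-sentence obtained from $\alpha(x_1,\dots,x_r)$ by substituting $c_j$ for $x_j$, and apply Theorem~\ref{first fv corollary} to $\alpha^+$ and the family $\{H_i^+:i\in\N\}$: there are $n\in\N$ and $J\subseteq\N$ with $|J|\le n$ such that if $\prod_{i\in\N}H_i^+\models\alpha^+$, then $\prod_{i\in J'}H_i^+\models\alpha^+$ for every $J'$ with $J\subseteq J'\subseteq\N$. Assuming $G\models\alpha(\bar a)$, i.e.\ $\prod_{i\in\N}H_i^+\models\alpha^+$, apply this with $J'=\{0,1,\dots,k\}$ for any $k\ge\max(J)$ to obtain $\prod_{i\le k}H_i^+\models\alpha^+$, that is, $G_k\models\alpha(\bar a_k)$. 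Hence $G_k\models\alpha(\bar a_k)$ for all sufficiently large $k$.

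For the converse, I would run the identical argument on $\neg\alpha$: if $G\not\models\alpha(\bar a)$ then $G\models(\neg\alpha)(\bar a)$, so the previous paragraph applied to $\neg\alpha$ gives $G_k\models(\neg\alpha)(\bar a_k)$, i.e.\ $G_k\not\models\alpha(\bar a_k)$, for all sufficiently large $k$. Contrapositively, if $G_k\models\alpha(\bar a_k)$ for all sufficiently large $k$, then $G\models\alpha(\bar a)$, which completes the equivalence.

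The only delicate point — and the natural place for the argument to break — is the bookkeeping in the first paragraph: one must verify that the model-theoretic direct product $\prod_iH_i^+$ really is $(G,\bar a)$ and that $\prod_{i\le k}H_i^+$ really is $(G_k,\bar a_k)$, with the new constants landing on the correct coordinates of $\bar a$ and $\bar a_k$. Both identifications rest on orbit independence (that is precisely what makes the restriction maps $G\to G_k$ and $G_k\to\prod_{i\le k}H_i$ surjective onto the finite products), exactly as in the proof of Corollary~\ref{FV corollary}; no new combinatorial or computability-theoretic difficulty arises beyond that.
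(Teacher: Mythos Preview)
Your proof is correct and follows essentially the same route as the paper: add constants for the parameters $\bar a$, identify $(G,\bar a)$ and $(G_k,\bar a_k)$ with the corresponding products of the expanded factors $H_i^+$, and then invoke Feferman--Vaught (Theorem~\ref{first fv corollary}) exactly as in Corollary~\ref{FV corollary}. If anything, you spell out the product identifications more carefully than the paper does.
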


\begin{proof}
    Let $\bar a=\left(a^{(1)},...,a^{(n)}\right)$. Let $\LL$ be the language of groups and let $\LL^*=\LL\cup \{c^{(1)},...,c^{(n)}\}$ where each $c$ is a new constant symbol. Define $G^*$ to be $G$ but in the expanded language $\LL^*$ with $$G^*\models a^{(1)}=c^{(1)} \,\& \, \cdots \,\& \, a^{(n)}=c^{(n)}.$$ Similarly, for each $k\in \N$ define $H_k^*$ to be $H_k$ in the expanded language with $$H^*_k\models a^{(1)}\upharpoonright O_{G,k}=c^{(1)} \,\& \, \cdots \,\& \, a^{(n)}\upharpoonright O_{G,k}=c^{(n)}.$$ We have $G\models \alpha(\bar a)$ if and only if $G^*\models \alpha(\bar c)$. Note that $\alpha(\bar c)$ is a sentence in $\LL^*$, thus the same reasoning used in the proof of Corollary \ref{FV corollary} gives that $G^*\models \alpha(\bar c)$ holds if and only if $G^*_k\models \alpha(\bar c)$ for all but finitely many $k$ (where $G^*_k$ is defined analogously to $G_k$). Last, note that $G^*_k\models \alpha(\bar c)$ if and only if $G_k\models \alpha(\bar a_k)$. \qed
\end{proof}

\begin{theorem}
\label{gives elementary}
    Let $G$ be a profinite subgroup of $S_\infty$ with orbit independence. If $I$ is a Scott ideal with $\deg(T_G)\in I$, then $G_I \preceq G$. 
\end{theorem}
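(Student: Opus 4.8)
The strategy is to verify the Tarski--Vaught criterion. Since $G_I$ is already a subgroup, hence a substructure, of $G$, it suffices to show: for every first order formula $\phi(x,\bar y)$ in the language of groups and every tuple $\bar a\in G_I\lom$, if $G\models (\E x)\phi(x,\bar a)$ then there is some $b\in G_I$ with $G\models \phi(b,\bar a)$. This isolates exactly one ``witness search'' and lets us reuse, now at the level of an arbitrary first order $\phi$, the tree argument from the proof of the previous proposition.

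So fix $\phi$ and $\bar a\in G_I\lom$, and suppose $g\in G$ satisfies $G\models\phi(g,\bar a)$. Applying Lemma \ref{FV with free variables} to $\phi$ with parameter tuple $(g,\bar a)$, there is an $L\in\N$ such that $G_k\models\phi(g_k,\bar a_k)$ for all $k\geq L$. Now form the tree
$$T_{\phi,\bar a}=\Big\{\tau\in T_G:\ g_L\sqsubseteq\tau\ \&\ G_{\text{lv}(\sigma)}\models\phi\big(\sigma,\bar a_{\text{lv}(\sigma)}\big)\text{ for every }\sigma\in\textstyle\bigcup_k G_k\text{ with }g_L\sqsubseteq\sigma\sqsubseteq\tau\Big\},$$
where $\text{lv}(\sigma)$ is the $k$ with $\sigma\in G_k$ (identifying a complete-level node of $T_G$ with the corresponding element of $G_k$, exactly as in the proof of the preceding proposition, but with the no longer necessarily quantifier free $\phi$ and a single witness variable). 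Then $T_{\phi,\bar a}$ is a finite branching subtree of $T_G$ with root $g_L$; it is infinite because $g$ itself determines a path through it (this is precisely what the choice of $L$ provides); and it is computable from $I$, since $T_G$ and its branching function are $I$-computable as $\deg(T_G)\in I$, the restrictions $\bar a_k$ are uniformly $I$-computable as $\bar a\in G_I\lom$, and each $G_k$ is a finite group uniformly computable from $T_G$, so the truth value of $G_k\models\phi(\sigma,\bar a_k)$ is decidable once the finite structure $G_k$ with its parameters is produced.

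Because $I$ is a Scott ideal, it contains a degree PA over $\deg(T_{\phi,\bar a})\in I$, which by the preceding proposition computes a path through the $I$-computable finite branching infinite tree $T_{\phi,\bar a}$; hence there is an $I$-computable $b\in[T_{\phi,\bar a}]$. Since $[T_{\phi,\bar a}]\subseteq[T_G]=i(G)$ (using that $G$, being profinite, is closed, so Proposition \ref{closed} applies), $b$ is (the image of) an element of $G$, and $\deg(b)\in I$, so $b\in G_I$. By construction $G_k\models\phi(b_k,\bar a_k)$ for all $k\geq L$, so Lemma \ref{FV with free variables} gives $G\models\phi(b,\bar a)$. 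This supplies the required witness, the Tarski--Vaught criterion holds, and therefore $G_I\preceq G$.

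The crux --- and the reason both hypotheses are genuinely used --- is the two-way application of Lemma \ref{FV with free variables}: orbit independence lets us trade satisfaction of an arbitrary first order formula in $G$ for its satisfaction in all sufficiently large finite quotients $G_k$, converting the witness search into the search for a path through an $I$-computable finite branching tree, and the Scott ideal then delivers such a path inside $I$. The only bookkeeping subtlety, already handled by the function $\text{lv}$ in the earlier Scott ideal argument, is that $T_G$ has nodes not of the form $g\upharpoonright\bigcup_{i\leq k}O_{G,i}$ for any $k$ (lying strictly between consecutive ``complete'' levels); these are simply not tested in the definition of $T_{\phi,\bar a}$.
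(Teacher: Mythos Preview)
Your proof is correct and rests on the same core machinery as the paper's: Lemma \ref{FV with free variables} to pass between $G$ and the $G_k$, the tree of partial witnesses, and the Scott ideal hypothesis to extract an $I$-computable path. The one genuine difference is organizational. The paper proceeds by induction on quantifier rank: at the inductive step it must conclude $G_I\models\beta(\bar g',\bar a)$ from $G\models\beta(\bar g',\bar a)$, and this transfer back to $G_I$ is exactly what the inductive hypothesis supplies. You instead invoke the Tarski--Vaught test, which only asks for a witness $b\in G_I$ with $G\models\phi(b,\bar a)$; no transfer to $G_I$ is needed, so no induction is needed. This is a tidy simplification: the two-way use of Lemma \ref{FV with free variables} already does all the work, and Tarski--Vaught lets you exploit that without the bookkeeping of a quantifier-rank induction. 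One small point worth making explicit (you implicitly rely on it): the root $g_L$ of your tree is a single finite object, so even though $g$ itself need not be $I$-computable, $T_{\phi,\bar a}$ is still $I$-computable.
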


\begin{proof}
    We must show that $$G_I\models 
    \alpha(\bar a) \iff G\models 
    \alpha (\bar a)$$ for all 
    $\bar a\in G_I\lom$ and all first order formulas $\alpha$.
    We prove this by induction on the quantifier rank of $\alpha$, which we denote as $\qr(\alpha)$. Note, it is clear that the result holds when $\qr(\alpha)=0$. Suppose that it holds for all formulas of quantifier rank at most $n$. Let $\alpha(\bar x)=\E\bar y\beta(\bar y,\bar x)$ with $\qr(\beta)=n$ and fix $\bar a\in G_I$. If $G_I\models \alpha(\bar a)$, then there is some $\bar g\in G_I$ such that $G_I\models \beta(\bar g, \bar a)$. By inductive hypothesis, we have that $G\models \beta(\bar g, \bar a)$ and thus $G\models \alpha(\bar a)$. 

    For the other direction, suppose that $G\models \alpha(\bar a)$. We have that there exists a $\bar g\in G\lom $ such that $G\models \beta(\bar g, \bar a)$. Thus, by Lemma \ref{FV with free variables} there exists an $L\in \N$ such that $G_k\models \beta(\bar g_k,\bar a_k)$ for all $k\geq L$. Define $$T_{\beta,\bar a}=\left\{\bar \tau\in T_G^{|\bar g|}: \bar g_L\sqsubseteq \bar \tau \,\&  \bigwedge_{\bar g_L \sqsubseteq \bar \sigma \sqsubseteq \bar \tau} G_{\text{lv}(\bar \sigma)}\models \beta\left(\bar \sigma, \bar a_{\text{lv}(\bar \sigma)}\right) \right\} $$ where $\text{lv}(\bar \sigma)$ is the natural number $k$ such that $\bar \sigma \in G_k\lom$. Note that $T_{\beta,\bar a}$ is a finite branching tree with $\bar g_L$ as a root, and that it must be infinite (in particular, $\bar g$ gives a path). Additionally, recall that the branching function is computable in $\deg(T_G)$.  Since $T_G$ and $\bar a$ are computable in $I$ we have that $T_{\beta,\bar a}$ is computable in $I$. Thus, since $I$ is a Scott ideal, there must be an $I$ computable path through $T_{\beta,\bar a}$ and therefore a $\bar g'\in G_I\lom$ such that $G_k\models \beta(\bar g'_k,\bar a_k)$ for all $k\geq L$. By Lemma \ref{FV with free variables} we have that $G\models \beta(\bar g', \bar a)$ and thus $G_I\models \beta(\bar g', \bar a)$ by inductive hypothesis. Hence, $G_I\models \alpha(\bar a)$.  \qed
\end{proof}

\section{Further Questions}
\begin{question}
    How complicated can the $\E\forall$-theory of a profinite group be (relative to its tree presentation)? Is there any good upper bound like what Theorem \ref{sigma 2} gives us for the $\E$-theory?
\end{question}

\begin{question}
    Furthermore, how complicated can the entire first order theory be? Theorem \ref{delta 2} gives a good upper bound for the orbit independent case. Is there a good upper bound for the general case?
\end{question}

\begin{question}
    Given a Scott ideal $I$ and an arbitrary profinite $G$, is $G_I\preceq_{\E\forall} G$? If so, for how complicated a class of formulas is $G_I$  elementary in $G$? Might we always have $G_I\preceq G$?
\end{question}

\begin{question}
    Same as the above question, but with an arbitrary Turing ideal $I$ and an orbit independent $G$.
\end{question}

\begin{question}
    Are there any Turing ideals $I$ such that $G_I\preceq G$ for all profinite $G$?
\end{question}

\subsubsection{\ackname} The author wishes to acknowledge useful conversations with Meng-Che ``Turbo" Ho regarding the Feferman-Vaught Theorem.

\end{document}